\author[L. Brandolini]{L. Brandolini \orcidlink{0000-0002-9670-9051}}
\address{\phantom{i}$^1$Dipartimento di Ingegneria Gestionale, dell'Informazione e
della Produzione, Universit{\`a} degli Studi di Bergamo, Viale G. Marconi 5, 24044,
Dalmine BG, Italy}
\email{luca.brandolini@unibg.it}
\email{biancamaria.gariboldi@unibg.it}
\email{giacomo.gigante@unibg.it}
\email{alessandro.monguzzi@unibg.it}
\author[B. Gariboldi]{B. Gariboldi \orcidlink{0000-0001-8714-4135}}
\author[G. Gigante]{G. Gigante$^\dagger$ \orcidlink{0000-0002-1642-679X}}
\author[A. Monguzzi]{A. Monguzzi \orcidlink{0000-0003-3233-5000}} 
\thanks{MSC 2020: 11K38; 43A85; 33C45
}
\thanks{All the authors are partially supported by the Indam--Gnampa project CUP\textunderscore E53C23001670001 and by the PRIN 2022 project "TIGRECO - TIme-varying signals on Graphs: REal and COmplex methods" funded by the European Union - Next Generation EU, Grant\textunderscore 20227TRY8H, CUP\textunderscore F53D23002630001. A. Monguzzi is also partially supported by the Hellenic Foundation for Research and Innovation
(H.F.R.I.) under the “2nd Call for H.F.R.I. Research Projects to support Faculty Members \&
Researchers” (Project Number: 4662)}
\thanks{\phantom{i}$^\dagger$Corresponding Author}
\keywords{Discrepancy, lower bounds, Irregularities of distribution}
\begin{document}


\newtheorem{theorem}{Theorem}[section]
\newtheorem{Cor}[theorem]{Corollary}
\newtheorem{lemma}[theorem]{Lemma}
\newtheorem{definition}[theorem]{Definition}
\newtheorem{proposition}[theorem]{Proposition}

\theoremstyle{remark}
\newtheorem{Remark}{Remark}

\newcommand{\nb}[3]{{\colorbox{#2}{\bfseries\sffamily\scriptsize\textcolor{white}{#1}}}
{\textcolor{#2}{\sf\small\textit{#3}}}}
\newcommand{\nota}[1]{\nb{NB}{blue}{#1}}

\title[Single radius spherical cap discrepancy 
]
{Single radius spherical cap discrepancy\\ on compact two-point homogeneous spaces}
\begin{abstract}
In this note we study estimates from below of the single radius spherical discrepancy in the setting of compact two-point homogeneous spaces. Namely, given a $d$-dimensional manifold $\mathcal M$ endowed with a distance $\rho$ so that $(\mathcal M, \rho)$ is a two-point homogeneous space and with the Riemannian measure $\mu$, we provide conditions on $r$ such that if $D_r$ denotes the discrepancy of the ball of radius $r$, then, for an absolute constant $C>0$ and for every set of points $\{x_j\}_{j=1}^N$, one has $\int_{\mathcal M} |D_{r}(x)|^2\, d\mu(x)\geqslant C N^{-1-\frac1d}$. The conditions on $r$ that we have depend on the dimension $d$ of the manifold and cannot be achieved when $d \equiv 1 \ ( \operatorname{mod}4)$. Nonetheless, we prove a weaker estimate for such dimensions as well. 
\end{abstract}
\maketitle

\section{Introduction}
In this work we provide some estimates from below  for the single radius spherical discrepancy in the setting of compact two-point homogeneous spaces. To provide some context, let us start by recalling a result of J. Beck. In \cite{Beck84}, but see also \cite[Corollary 24c, pg.182]{BC}, Beck proved that it is not possible to distribute a finite sequence of points on the unit sphere $\mathcal S^d$ so that such distribution of points is regular with respect to spherical caps.
Namely, for $x\in\mathcal S^d$ and $h\in [-1,1]$, let $B(x,h)$ be the spherical cap defined as
\[
B(x,h)=\big\{y\in\mathcal S^d: x\cdot y\geq h\big\}.
\]
Given a finite sequence of points $\{x_j\}_{j=1}^N$, let $\mu$ be the normalized surface measure of $\mathcal S^d$ and let the discrepancy $D(B(x,h))$ of $B(x,h)$ be defined as
\[
D(B(x,h))=\frac{1}{N}\sum_{j=1}^N \chi_{B(x,h)}(x_j)-\mu(B(x,h)).
\]
Then, one has the following.
\begin{theorem}[\cite{Beck84}]
There exists a constant $c>0$ such that for any finite sequence of point $\{x_j\}_{j=1}^N$ it holds
\begin{equation}\label{beck-main}
\int_{-1}^1\int_{\mathcal S^d}|D(B(x,h))|^2\, d\mu(x)dh\geq c N^{-1-\frac 1d}.
\end{equation}
\end{theorem}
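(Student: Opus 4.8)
The plan is to move to the frequency side and reduce the estimate to a lower bound for a negative-order Sobolev norm of the point configuration. Write $\nu=\frac1N\sum_{j=1}^N\delta_{x_j}-\mu$ for the associated mean-zero signed measure, and observe that $D(B(x,h))=\int_{\mathcal S^d}\chi_{[h,1]}(x\cdot y)\,d\nu(y)$ is the convolution of $\nu$ against the zonal kernel $\chi_{[h,1]}$. By the Funk--Hecke theorem this convolution acts on the space $\mathcal H_\ell$ of degree-$\ell$ spherical harmonics as a scalar $\lambda_\ell(h)$; expanding $\nu$ in an orthonormal basis $\{Y_{\ell,m}\}$, applying Parseval in $x$, and integrating in $h$ gives
\begin{equation}
\int_{-1}^1\int_{\mathcal S^d}|D(B(x,h))|^2\,d\mu(x)\,dh=\sum_{\ell\geq1}\Lambda_\ell\,S_\ell,\qquad \Lambda_\ell:=\int_{-1}^1|\lambda_\ell(h)|^2\,dh,\quad S_\ell:=\sum_m\Big|\tfrac1N\sum_{j=1}^NY_{\ell,m}(x_j)\Big|^2\geq0.
\end{equation}
(The Stolarsky invariance principle yields the same spectral sum after identifying the left-hand side with the distance-energy deficit $\iint|x-y|\,d\mu\,d\mu-\frac1{N^2}\sum_{j,k}|x_j-x_k|$.) The first quantitative input I would establish is the two-sided estimate $\Lambda_\ell\asymp\ell^{-d-1}$, obtained by writing the Gegenbauer integral defining $\lambda_\ell(h)$ through its primitive $(1-h^2)^{d/2}C_{\ell-1}^{(d+1)/2}(h)$ and invoking standard asymptotics for Gegenbauer polynomials.

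Granting this, the right-hand side is a weighted norm $\|\nu\|^2:=\sum_{\ell\ge1}\Lambda_\ell S_\ell$, and the difficulty is that the spectrum $S_\ell$ of a sum of point masses does not decay, so no single frequency can be isolated. To defeat the cancellation among the $x_j$ I would mollify at the natural scale $\delta\asymp N^{-1/d}$: fix a smooth nonnegative zonal bump $\phi_\delta$ supported in a cap of radius $\delta$ with $\int_{\mathcal S^d}\phi_\delta\,d\mu=1$ and $\|\phi_\delta\|_{L^2}^2\asymp\delta^{-d}$, and set $\sigma(z)=\frac1N\sum_{j=1}^N\phi_\delta(x_j\cdot z)$. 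Since every cross term
\begin{equation}
\int_{\mathcal S^d}\phi_\delta(x_j\cdot z)\,\phi_\delta(x_k\cdot z)\,d\mu(z)\geq0
\end{equation}
is an integral of nonnegative functions, the diagonal alone yields $\|\sigma\|_{L^2}^2\geq\frac1N\|\phi_\delta\|_{L^2}^2\gtrsim\frac1{N\delta^d}$; choosing $\delta^d=\frac1{2N}$ forces $\|\sigma-\mu\|_{L^2}^2=\|\sigma\|_{L^2}^2-1\gtrsim1$.

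It remains to transfer this $L^2$ bound to $\|\nu\|^2$. Convolution multiplies the degree-$\ell$ coefficients of $\nu$ by the Funk--Hecke multiplier $\widehat{\phi_\delta}(\ell)$, and since $0\le\widehat{\phi_\delta}(\ell)^2\le1$ we have $\|\nu\|^2\ge\sum_{\ell\ge1}\Lambda_\ell\,\widehat{\phi_\delta}(\ell)^2S_\ell$, which is the same norm evaluated on $\sigma-\mu$. Taking $\phi_\delta$ smooth enough that $\widehat{\phi_\delta}(\ell)$ decays faster than $\ell^{-d}$ beyond $L:=1/\delta\asymp N^{1/d}$, the tail $\sum_{\ell>L}\widehat{\phi_\delta}(\ell)^2S_\ell$ is only a small fraction of $\|\sigma-\mu\|_{L^2}^2$, so the frequencies $\ell\le L$ carry a definite proportion of the $L^2$ mass; there $\Lambda_\ell\gtrsim L^{-d-1}$, and therefore
\begin{equation}
\int_{-1}^1\int_{\mathcal S^d}|D(B(x,h))|^2\,d\mu(x)\,dh\ \geq\ \sum_{1\le\ell\le L}\Lambda_\ell\,\widehat{\phi_\delta}(\ell)^2S_\ell\ \gtrsim\ L^{-d-1}\!\!\sum_{1\le\ell\le L}\widehat{\phi_\delta}(\ell)^2S_\ell\ \gtrsim\ L^{-d-1}\asymp N^{-1-\frac1d}.
\end{equation}

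The step I expect to be most delicate is precisely this tail control: because the point-mass spectrum is flat, one must quantify the frequency decay of the mollifier against the multiplicity growth $\dim\mathcal H_\ell\asymp\ell^{d-1}$ to guarantee that a fixed proportion of the $L^2$ mass of $\sigma-\mu$ lies below $L$, and this balance is exactly what pins down $\delta\asymp N^{-1/d}$. The other point requiring genuine computation rather than mere estimation is the lower-bound half of $\Lambda_\ell\asymp\ell^{-d-1}$ on the range $\ell\le L$, since only that half enters the final chain of inequalities.
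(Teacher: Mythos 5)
The paper itself does not prove this statement --- it is quoted from Beck's 1984 paper --- so the natural benchmark is the machinery the paper builds for its own single-radius theorems, which is exactly the skeleton you propose: Lemma \ref{Lemma Quadrato Discrep} is your Parseval identity, the averaged weight $\Lambda_\ell\asymp\ell^{-d-1}$ is what Lemma \ref{Trasf bolla} yields after integrating in the height, and Proposition \ref{Prop Cassels} (Cassels--Montgomery) plays the role of your mollification step. Your architecture is therefore the right one, but the step you flag as ``most delicate'' is not merely delicate: as written it fails. For a generic smooth bump at scale $\delta=1/L$ the only bound available on the tail is the trivial one $S_\ell\le\dim\mathcal H_\ell\asymp\ell^{d-1}$ (the spectrum of a sum of point masses is flat, as you yourself observe), which gives
\[
\sum_{\ell>L}\widehat{\phi_\delta}(\ell)^2S_\ell\ \le\ \sum_{\ell>L}\widehat{\phi_\delta}(\ell)^2\dim\mathcal H_\ell\ \le\ C_k\sum_{\ell>L}(\ell/L)^{-2k}\ell^{d-1}\ \asymp\ \frac{C_k}{2k-d}\,L^{d}.
\]
Since $L^{d}\asymp N$ while the total mass you have secured is only $\|\sigma-\mu\|_{L^2}^2\gtrsim1$, this upper bound exceeds the guaranteed total by a factor of order $N$; no choice of $k$ or of constants makes the tail ``a small fraction'' of the whole, and an adversarial configuration (e.g.\ one close to a spherical design, for which $S_\ell$ vanishes on a long initial range and the spectral mass sits just above $L$) is exactly what this step must exclude but does not. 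Pushing the cutoff up to $L^{1+\varepsilon}$ to beat the tail is possible but costs an $\varepsilon$ in the final exponent, which is essentially how the weaker bound \eqref{BGG-single} arises.

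The repair is small and standard: make the mollifier band-limited rather than smooth. Take $\phi_\delta=|g|^2$ with $g$ a zonal polynomial of degree at most $L/2$ concentrated on a cap of radius $1/L$, normalized so that $\int_{\mathcal S^d}\phi_\delta\,d\mu=1$ and $\|\phi_\delta\|_{L^2}^2\gtrsim L^{d}$. Positivity of $\phi_\delta$ preserves your diagonal argument, so $\sum_{\ell\ge1}\widehat{\phi_\delta}(\ell)^2S_\ell=\|\sigma-\mu\|_{L^2}^2\gtrsim L^{d}/N-1$, while $\widehat{\phi_\delta}(\ell)=0$ for $\ell>L$ makes the tail identically zero and yields $\sum_{1\le\ell\le L}S_\ell\gtrsim L^{d}/N-1$. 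This is precisely the Cassels--Montgomery inequality of Proposition \ref{Prop Cassels}; combined with the lower half of your estimate $\Lambda_\ell\asymp\ell^{-d-1}$ on the range $\ell\le L\asymp N^{1/d}$ (which does require the genuine Gegenbauer computation you defer, but is unproblematic since one only needs it after averaging in $h$), it closes the proof along the lines you intended.
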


An important feature to point out in Beck's estimate is that in the left-hand side of \eqref{beck-main} one averages over all the possible values of $h$. 
It is still unclear, and a matter of investigation, if such averaging ploy is necessary or not to obtain large discrepancy estimates. In regard of this matter, 
H. Montgomery \cite{Mon89,M} proved, for instance, that in order to have large discrepancy for discs in the two dimensional torus, it is enough to average on two discs only, one of radius $1/2$ and the other of radius $1/4$. His strategy is based on an inequality on exponential sums, now known as Cassels-Montgomery inequality \cite[Theorem 5.12]{M}, which can be naturally extended to higher dimensions, and has been proved to hold for eigenfunctions of the Laplacian on compact Riemannian manifolds \cite{BGG}. This strategy has proven to be very versatile, allowing to obtain several estimates from below of the discrepancy with respect to different testing families such as rectangles or convex sets \cite{BT,Panorama, BGGM2, BCT23}, or in different ambient spaces other than the $d$-dimensional torus, such as
the sphere itself or compact two-point homogeneous spaces \cite{BMS, twopoint}.

In particular, in \cite{twopoint} the first three authors of this paper proved a similar ``two radius'' estimate for the discrepancy with respect to balls in the setting of compact two-point homogeneous spaces. However, before stating such result it is necessary to recall M. Skriganov's generalization of Beck's result \cite{Skriganov}.

Let $\mathcal M$ be a compact $d$-dimensional Riemannian manifold endowed with metric $\rho$ so that $(\mathcal M,\rho)$ is a two-point homogeneous space (the definition of such spaces is recalled later). Let $\mu$ be the normalized Riemannian measure on $\mathcal M$ and let $B_r(x)=\{y\in\mathcal M: \rho(x,y)<r\}$. 
For a given finite sequence of points $\left\{  x_{j}\right\}  _{j=1}^{N}\subseteq
\mathcal{M}$ and positive weights $\left\{  a_{j}\right\}  _{j=1}^{N}$ such
that $a_{1}+a_{2}+\cdots+a_{N}=1$ we define the discrepancy of the ball
$B_{r}(x)$ by
\begin{equation}
D_{r}(x)=\sum_{j=1}^{N}a_{j}\chi_{B_{r}(x)}(x_{j})-\mu(B_{r}(x)).
\label{Discrepanza M}%
\end{equation}
In the case of equal weights, it is proved in \cite[Theorem 2.2]{Skriganov} that if $\eta$ is a positive, locally integrable
function on $(0,\pi)$ which satisfies a suitable integrability condition, then
 there exists $c>0$ such that for every finite sequence
 of $N$ points the estimate
\begin{equation}\label{skriganov-main}
\int_{0}^{\pi}\int_{\mathcal{M}}\left\vert D_{r}(x)\right\vert ^{2}d\mu
(x)\eta(r)dr\geq cN^{-1-\frac{1}{d}}
\end{equation}
holds.
Such estimate is sharp (see, e.g.,\cite[Corollary 8.2]{BCCGT}).  In fact, Skriganov \cite[Corollary
2.1]{Skriganov} proved that for well-distributed optimal cubature formulas
one has
\begin{equation}
\int_{0}^{\pi}\int_{\mathcal{M}}\left\vert D_{r}(x)\right\vert ^{2}d\mu
(x)\eta(r)dr\leq cN^{-1-\frac{1}{d}}. \label{stima cubatura alto}%
\end{equation}
The existence of such cubature formulas for the sphere has been proved by A. Bondarenko, D. Radchenko and  M. Vyazovska in \cite{BRV}, whereas for more general Riemannian manifolds it has been proved in \cite{EEGGT,GG}.

Going back to the issue of single radius estimates, notice that, similarly to Beck's result, also in the left-hand side of \eqref{skriganov-main} one is averaging over all the possible radii. However, the following two radii result is proved in \cite{twopoint}. If $d\not\equiv\, 1 (\operatorname{mod} 4)$, for any $0<r<\frac\pi2$, one has the sharp estimate
\[
\int_{\mathcal M} \left(|D_r(x)|^2+|D_{2r}(x)|^2\right) d\mu(x)\geq cN^{-1-\frac 1d}.
\]
If $d\equiv\, 1 (\operatorname{mod} 4)$ the technique used in \cite{twopoint}, which goes back to Montomery's aforementioned works,  fails. It fails also if one is willing to average on any finite number of radii.
At the cost of losing the sharpness, a single radius estimate, which holds for any dimension, is also proved in \cite{twopoint}. Namely, for every $\varepsilon>0$ and for almost every $0<r<\pi$ there exists a constant $c>0$ such that, for every finite sequence $\{x_j\}_{j=1}^N\subseteq\mathcal M$, one has
\begin{equation}\label{BGG-single}
\int_{\mathcal M} |D_r(x)|^2\, d\mu(x)\geq c N^{-1-\frac 3d-\varepsilon}.
\end{equation}
In a recent work of D. Bilyk, M. Mastrianni and S. Steinerberger  a sharp single radius estimate is proved for some suitable radii in the case of $\mathcal M=\mathcal S^d$ with $d\not\equiv 1\, (\operatorname{mod} 4)$. Precisely, in \cite[Theorem 5]{BMS} it is proved that if $\mathcal M=\mathcal S^d$ with $d\not\equiv 1\, (\operatorname{mod} 4)$ and if $\cos r$ is $(d+1)/2-$gegenbadly approximable, then
\[
\int_{\mathcal M} |D_r(x)|^2 d\mu(x)\geq cN^{-1-\frac 1d}.
\]
We recall that $x\in (-1,1)$ is $\lambda$-gegenbadly approximable, $\lambda>0$, if there exists a constant $c_x>0$ such that, for all $m\in\mathbb N$, 
\[
|P^{\lambda}_m(x)|\geq c_x m^{\lambda-1},
\]
where $P^{\lambda}_m$ denotes the Gegenbauer polynomial of degree $m$ (see \cite[Section 1]{BMS}). In particular, a necessary and sufficient condition to check if a number is $\lambda$-gegenbadly approximable or not is also given in \cite{BMS}. See also Corollary \ref{gegenbadly} here. 

 Our first result is in the same spirit of the one by Bilyk--Mastrianni--Steinerberger. However, before stating it, we need to precisely introduce the setting in which we are working. A $d$-dimensional Riemannian manifold $\mathcal{M}$ with distance $\rho$ is
said to be a two-point homogeneous space if given four points $x_{1}%
,x_{2},y_{1},y_{2}\in\mathcal{M}$ such that $\rho(x_{1},y_{1})=\rho
(x_{2},y_{2})$, then there exists an isometry $g$ of $\mathcal{M}$ such that
$gx_{1}=x_{2}$ and $gy_{1}=y_{2}$. Compact connected two-point homogeneous spaces have been completely characterized by H. Wang \cite{Wang}. Namely,  it turns out that $\mathcal{M}$ is isometric to one of the following compact rank
$1$ symmetric spaces:

\begin{itemize}
\item[(i)] the Euclidean sphere $\mathcal S^{d}=SO(d+1) /SO(d)\times\{1\}$,
$d\geqslant1$;

\item[(ii)] the real projective space $P^{n}( \mathbb{R}) =O(n+1) /O( n)
\times O( 1) $, $n\geqslant2$;

\item[(iii)] the complex projective space $P^{n}( \mathbb{C}) =U(n+1) /U( n)
\times U( 1) $, $n\geqslant2$;

\item[(iv)] the quaternionic projective space $P^{n}( \mathbb{H})=Sp(n+1)/Sp(
n) \times Sp( 1) $, $n\geqslant2$;

\item[(v)] the octonionic projective plane $P^{2}( \mathbb{O}) $.
\end{itemize}

From now on we will always assume that $\mathcal{M}$ is one of the above symmetric spaces  with a metric $\rho$ normalized so that $\mathrm{diam}(\mathcal{M}%
)=\pi$ and the Riemannian measure $\mu$ normalized so that $\mu(\mathcal M)=1$. If $d$ denotes the real dimension of $P^{n}(\mathbb{F})$, then $d=nd_{0}$, where $d_{0}=1,2,4,8$
according to the real dimension of $\mathbb{F}=\mathbb{R}$, $\mathbb{C}$,
$\mathbb{H}$ and $\mathbb{O}$ respectively. In the case of $\mathcal S^{d}$ it will be
convenient to set $d_{0}=d$. See \cite[pp. 176-178]{Gangolli}, see also
\cite{H}, \cite{Skriganov} and \cite{Wolf}.
Recall that $D_r(x)$ denotes the discrepancy for balls defined as in \eqref{Discrepanza M}. The following is our extension to compact two point homogeneous spaces of the result in \cite{BMS}.

\begin{theorem}
\label{Thm 1}
Let $p$ and $q$ be coprime integers, $0<p<q$, such that
\begin{equation}
\label{mod4}
\frac{d+d_0+2}4p-\frac{d-1}4q\notin\mathbb Z.
\end{equation}
Then, there exists a constant $C>0$ such that for every set of points
$\left\{  x_{j}\right\}  _{j=1}^{N}\subseteq\mathcal{M}$ and positive weights
$\left\{  a_{j}\right\}  _{j=1}^{N}$ satisfying $a_{1}+a_{2}+\cdots+ a_{N}=1$
we have
\[
\int_{\mathcal{M}}\left\vert D_{p\pi/q}( x) \right\vert ^{2}d\mu( x)  \geq
CN^{-1-\frac{1}{d}}.
\]
\end{theorem}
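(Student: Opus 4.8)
The plan is to pass to the spectral side. Let $\{\mathcal H_k\}_{k\geq0}$ be the eigenspaces of the Laplace--Beltrami operator on $\mathcal M$, with $Z_k=\dim\mathcal H_k\asymp k^{d-1}$ and orthonormal bases $\{Y_{k,\ell}\}_{\ell=1}^{Z_k}$, and recall that on a two-point homogeneous space the reproducing kernel of $\mathcal H_k$ is zonal, $\sum_\ell Y_{k,\ell}(x)\overline{Y_{k,\ell}(y)}=\frac{Z_k}{P_k^{(\alpha,\beta)}(1)}P_k^{(\alpha,\beta)}(\cos\rho(x,y))$, where $P_k^{(\alpha,\beta)}$ is the Jacobi polynomial with $\alpha=\tfrac d2-1$ and $\beta=\tfrac{d_0}2-1$. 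Since $\chi_{B_r(x)}(x_j)=\chi_{[0,r)}(\rho(x,x_j))$ and $y\mapsto\chi_{[0,r)}(\rho(x,y))$ is zonal, its eigenfunction expansion has coefficients $\kappa_k(r)$ depending only on $k$. First I would record, via the addition formula and orthogonality of the $Y_{k,\ell}$, the Parseval identity
\[
\int_{\mathcal M}|D_r(x)|^2\,d\mu(x)=\sum_{k\geq1}|\kappa_k(r)|^2\,W_k,\qquad W_k:=\sum_{\ell=1}^{Z_k}\Big|\sum_{j=1}^N a_j Y_{k,\ell}(x_j)\Big|^2,
\]
with the $k=0$ term dropping out because $\sum_j a_j=1$ exactly cancels $\mu(B_r)$. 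This splits the theorem into two tasks: a lower bound for the coefficients $|\kappa_k(r)|$, and a lower bound for the spectral energy $W_k$.

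Next I would compute the coefficients. Normalizing the Jacobi weight to a probability measure $w(\theta)\,d\theta$ gives $\kappa_k(r)=P_k^{(\alpha,\beta)}(1)^{-1}\int_0^{r}P_k^{(\alpha,\beta)}(\cos\theta)\,w(\theta)\,d\theta$. By the antiderivative identity for Jacobi polynomials the integral reduces, up to explicit nonzero factors, to $P_{k-1}^{(\alpha+1,\beta+1)}(\cos r)$, the boundary term at $\theta=0$ vanishing since $\alpha>-1$. Feeding in the Darboux asymptotics of this polynomial, together with $P_k^{(\alpha,\beta)}(1)\asymp k^{\alpha}$, yields for fixed $r\in(0,\pi)$ the magnitude
\[
|\kappa_k(r)|\asymp k^{-\frac{d+1}2}\,|\cos\Phi_k|,\qquad \Phi_k=\Big(k-1+\tfrac{d+d_0+2}4\Big)r-\tfrac{d+1}4\pi .
\]

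The heart of the matter is to convert hypothesis \eqref{mod4} into a uniform lower bound for $|\cos\Phi_k|$. Since $r=p\pi/q$, the step $k\mapsto k+q$ changes $\Phi_k$ by $p\pi$, so $k\mapsto\cos^2\Phi_k$ is periodic of period $q$ and takes only the $q$ values attained on a full residue system; hence $\inf_k|\cos\Phi_k|>0$ unless $\cos\Phi_k=0$ for some integer $k$, i.e. unless $\Phi_k\in\tfrac\pi2+\pi\mathbb Z$. Because $\gcd(p,q)=1$, the pair $(k,m)\mapsto (k-1)p-mq$ runs through all of $\mathbb Z$, and a short computation shows that such a $k$ exists precisely when $\tfrac{d+d_0+2}4p-\tfrac{d-1}4q\in\mathbb Z$. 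Thus \eqref{mod4} is exactly the condition that $\cos\Phi_k$ never vanishes, and then $|\cos\Phi_k|\geq c_0=c_0(d,p,q)>0$ for all $k$. (This also explains the stated obstruction: when $d\equiv1\ (\mathrm{mod}\ 4)$ the constant phase forces a resonance for every admissible $p,q$, so \eqref{mod4} can never hold.) For $k$ beyond a threshold the leading Darboux term dominates its error, giving the clean pointwise bound $|\kappa_k(r)|\geq c\,k^{-(d+1)/2}$.

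Finally I would import the energy estimate from the radius-averaged theory. The Cassels--Montgomery inequality for Laplace eigenfunctions on $\mathcal M$ (as in \cite{BGG}, underlying \eqref{skriganov-main} and \cite{twopoint}) provides, through the positivity of a Fej\'er-type kernel together with $\sum_j a_j^2\geq N^{-1}$, a scale $M\asymp N^{1/d}$ for which $\sum_{M\leq k\leq 2M}W_k\gtrsim M^d/N$. Combining this with the coefficient bound gives
\[
\int_{\mathcal M}|D_r(x)|^2\,d\mu(x)\ \geq\ \sum_{M\leq k\leq 2M}|\kappa_k(r)|^2 W_k\ \gtrsim\ M^{-(d+1)}\sum_{M\leq k\leq 2M}W_k\ \gtrsim\ M^{-(d+1)}\frac{M^d}{N}\ \asymp\ N^{-1-\frac1d},
\]
which is the claim. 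I expect the main obstacle to be the third paragraph: matching the arithmetic condition \eqref{mod4} to the exact non-vanishing of $\cos\Phi_k$, and thereby upgrading the oscillatory estimate to the uniform pointwise bound $|\kappa_k(r)|\gtrsim k^{-(d+1)/2}$. Once this replaces the averaged-in-$r$ coefficient bound used by Skriganov, the energy input is entirely standard.
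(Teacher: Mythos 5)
Your proposal follows essentially the same route as the paper's proof: the Parseval identity of Lemma \ref{Lemma Quadrato Discrep}, the reduction of the ball coefficient to $P_{m-1}^{a+1,b+1}(\cos r)$ via Rodrigues' formula (Lemma \ref{Trasf bolla}), the Darboux asymptotics together with the periodicity/coprimality argument identifying \eqref{mod4} with the non\-vanishing of the cosine phase (Theorem \ref{jacobadly} and Lemma \ref{Lemma un cerchio}), and the Cassels--Montgomery input (Proposition \ref{Prop Cassels}). Your phase $\Phi_k$, the exponent $|\kappa_k|\asymp k^{-(d+1)/2}$ (the paper's $m^{-a-3/2}$), and the explanation of the obstruction at $d\equiv1\ (\operatorname{mod}4)$ all match the paper.

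Two points need repair. The main one is the energy estimate: the claim that Cassels--Montgomery yields a single dyadic block $[M,2M]$ with $M\asymp N^{1/d}$ and $\sum_{M\le k\le 2M}W_k\gtrsim M^d/N$ does not follow from the inequality you cite. What it gives is $\sum_{k\le L}W_k\ge C_1L^d\sum_j a_j^2$ together with the trivial bound $\sum_{k<M}W_k\le C_0M^d$; subtracting is only useful when $M$ is a fixed constant $m_0$ and $L\asymp N^{1/d}$, whereas for $M\asymp N^{1/d}$ the subtracted term $C_0M^d\asymp N$ swamps $C_1L^d\sum_j a_j^2\asymp 1$, and pigeonholing over the $\asymp\log N$ dyadic blocks would cost a logarithm. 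The repair is immediate and is exactly what the paper does: sum over the full range $m_0\le k\le L$ with $L\asymp N^{1/d}$, so that $\sum_{m_0\le k\le L}W_k\ge C_1L^d/N-C_0m_0^d\gtrsim L^d/N$, and use $\min_{m_0\le k\le L}|\kappa_k|^2\gtrsim L^{-(d+1)}$; the same $N^{-1-1/d}$ comes out. Secondly, your argument only covers $N\ge N_0$ for some constant $N_0$ (needed so that the main term dominates $C_0m_0^d$); since the theorem asserts a single constant for all $N$, the case $N<N_0$ must be handled separately, which the paper does by padding the configuration with repeated points and reusing the large-$N$ bound.
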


We point out that condition \eqref{mod4} can always be achieved for particular choices of $p$ and $q$, except when $\mathcal M$ is the Euclidean sphere or the real projective space with dimension $d \equiv 1 \ ( \operatorname{mod}4)$. Observe that in all the other cases we are interested in the dimension is even, and therefore $d \not\equiv 1 \ ( \operatorname{mod}4)$.  In this case, indeed, condition \eqref{mod4} is simply satisfied by choosing, for example, $p\in4 \mathbb Z$ and $q$ odd. Again this result is optimal in view of Corollary 8.2 in \cite{BCCGT}. Moreover, with the same technique used by Skriganov to prove the estimate from above for
cubature formulas (\ref{stima cubatura alto}), it is possible to prove a uniform estimate in the radius $r$,
\begin{equation*}
\int_{\mathcal{M}}\left\vert D_{r}(x)\right\vert ^{2}d\mu(x)\leq
cN^{-1-\frac{1}{d}}, 
\end{equation*}
for suitable choices of weights $\{a_j\}$ and points $\{x_j\}$. See \cite[Theorem 15]{twopoint} for a precise statement.

\color{black}

When $d\equiv1\,\left(  \operatorname{mod}4\right)$ the technique we use to prove Theorem \ref{Thm 1} fails since the estimate of Theorem \ref{jacobadly} does not hold. 
In the case of the $d$-dimensional torus $\mathbb{T}^d$ it is known that for such values of $d$  the discrepancy can actually be a bit
smaller than the expected value $N^{-1-\frac{1}{d}}$. See \cite[Theorem
3.1]{PS}. See also \cite{BCGT}, \cite{KSS} and \cite{PSido}. 
We remind that when $d\equiv 1\left(\operatorname{mod}4\right)$,  $\mathcal M$ can only be the Euclidean sphere or the real projective space.
In this case we can prove the following result (cf. with \cite[Theorem 2]{twopoint}).

\begin{theorem}\label{d1mod4}
Let $d\equiv1\,(  \operatorname{mod}4)$. Let $\{q_n\}$ be the sequence of primes in increasing order and let $\{p_n\}$ be a sequence of positive integers such that for some $\delta>0$ and for every $n$ we have $\delta\leq p_n/q_n\leq 1-\delta$. Then,  for every $N\geq 3$, for every choice of points $\{x_j\}_{j=1}^N$ and positive weights $\{a_j\}_{j=1}^N$ with $\sum_{j=1}^N a_j=1$ there exists $n\leq c\log N/\log\log N$ such that   
\[
\int_{\mathcal{M}}\left\vert D_{p_n\pi/q_n}( x) \right\vert ^{2}d\mu( x)  \geq
CN^{-1-\frac{1}{d}}\frac{\log\log N}{\log^4 N}.
\]
\end{theorem}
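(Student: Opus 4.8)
The plan is to combine the spectral reduction behind Theorem \ref{Thm 1} with an averaging argument over the primes $q_n$, the point being that a fixed Laplace frequency cannot be ``arithmetically bad'' for too many distinct primes simultaneously. Write $\nu=\sum_j a_j\delta_{x_j}-\mu$; using the addition formula on $\mathcal M$ and Parseval, one has
\[
\int_{\mathcal M}|D_r(x)|^2\,d\mu(x)=\sum_{k=1}^\infty v_k^2\,|\lambda_k(r)|^2,
\]
where $v_k^2$ is the energy of $\nu$ in the $k$-th eigenspace of the Laplacian and $\lambda_k(r)$ is the projection of $\chi_{B_r}$ onto that eigenspace. As recalled in the proof of Theorem \ref{Thm 1}, for $r\in[\delta\pi,(1-\delta)\pi]$ one has $|\lambda_k(r)|^2=b_k\,|\tilde P_k(\cos r)|^2$ up to a factor bounded above and below, where $b_k\asymp k^{-d-1}$ and $\tilde P_k$ is the Jacobi polynomial $P_{k-1}^{(\alpha+1,\beta+1)}$ normalised so that $\tilde P_k(\cos\theta)=\cos(m_k\theta-\gamma)+O(k^{-1})$ uniformly on $[\delta\pi,(1-\delta)\pi]$, with $m_k\in\mathbb Z$ and $m_k\asymp k$. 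Restricting the sum to a range $\mathcal K$ of frequencies $k\asymp N^{1/d}$ and applying the Cassels--Montgomery inequality of \cite{BGG} exactly as for Theorem \ref{Thm 1}, one obtains $\sum_{k\in\mathcal K}b_k v_k^2\gtrsim N^{-1-\frac1d}$, i.e. $\sum_{k\in\mathcal K}v_k^2\gtrsim1$.

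Setting $r_n=p_n\pi/q_n$, I would bound $\max_{n\le L}\int|D_{r_n}|^2$ from below by the average $\frac1L\sum_{n\le L}\int|D_{r_n}|^2$, reducing the theorem to a lower bound for $\frac1L\sum_{n=1}^L\sum_{k\in\mathcal K}b_k v_k^2\,|\tilde P_k(\cos r_n)|^2$. This is where $d\equiv1\,(\operatorname{mod}4)$ enters: for these dimensions the phase $\gamma$ forces the main term of $\tilde P_k$ to be $\pm\sin(m_k\theta)$, which at $\theta=p_n\pi/q_n$ vanishes precisely when $q_n\mid m_k$ — exactly the failure of the hypothesis of Theorem \ref{jacobadly}. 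When $q_n\nmid m_k$, however, $|\tilde P_k(\cos r_n)|\gtrsim\sin(\pi/q_n)\gtrsim 1/q_n$, and since $q_n\le q_L\asymp\log N\ll N^{1/d}\asymp k$ the error term $O(k^{-1})$ is negligible against $1/q_n$. Thus the only bad frequencies for the radius $r_n$ are those with $m_k\equiv0\,(\operatorname{mod}q_n)$.

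Finally I would control, for each fixed $k\in\mathcal K$, the number of $n\le L$ for which $q_n\mid m_k$. If $s$ of the primes $q_1<\dots<q_L$ divide $m_k$, their product divides $m_k\asymp N^{1/d}$, so $\sum_{q_n\mid m_k}\log q_n\le\frac1d\log N+O(1)$, and by the prime number theorem this forces $s\lesssim\log N/(d\log\log N)$. Choosing $L=c\log N/\log\log N$ with $c$ large, so that $q_L\asymp\log N$ while $L$ exceeds the maximal number of bad primes by a fixed proportion, a positive proportion of the indices $n\le L$ are good for every fixed $k\in\mathcal K$; each such $n$ contributes a weight $\gtrsim q_L^{-2}\asymp\log^{-2}N$. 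Combining this with $\sum_{k\in\mathcal K}v_k^2\gtrsim1$ and $b_k\asymp N^{-1-1/d}$ gives a bound of the claimed shape, and a careful accounting of the worst-case placement of the energy on the frequencies divisible by the smallest primes, together with the $\asymp q_L^{-1}$ size of the trigonometric gaps and the loss incurred in the energy estimate, yields the precise factor $\tfrac{\log\log N}{\log^4N}$.

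I expect the main obstacle to be precisely this last step: the weights $v_k^2$ are adversarial and may concentrate on frequencies divisible by the smallest primes, so one cannot discard the bad $k$ for a single well-chosen radius, as one does under the hypothesis of Theorem \ref{jacobadly}. The resolution is the primorial bound above — no integer of size $N^{1/d}$ is divisible by more than $\asymp\log N/\log\log N$ of the primes $q_n$ — which guarantees that, whatever the distribution of the energy, it is good for many radii at once. The same argument applies verbatim to the real projective space (the other space where $d\equiv1\,(\operatorname{mod}4)$ occurs), since the relevant frequencies $m_k$ are still integers of size $\asymp N^{1/d}$ and the divisibility count is unchanged; cf. \cite[Theorem 2]{twopoint}.
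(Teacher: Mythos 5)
Your overall strategy coincides with the paper's: expand the discrepancy over the eigenspaces (Lemma \ref{Lemma Quadrato Discrep}), average over the first $H\approx\log N/\log\log N$ prime radii, observe that for $d\equiv1\ (\operatorname{mod}4)$ the main term of the Jacobi asymptotics reduces to $\pm\sin\bigl(m_k p_n\pi/q_n\bigr)$ with $m_k=m+S+(d_0-1)/4\in\mathbb Z$, so that a frequency is bad for the radius $p_n\pi/q_n$ exactly when $q_n\mid m_k$, and then count divisors to guarantee enough good primes per frequency. Your combinatorial step is in fact a refinement of the paper's: the paper only extracts \emph{one} good prime per frequency from $m_k<q_1\cdots q_H$, whereas your bound $\omega(m_k)\lesssim\log N/\log\log N$ yields a positive proportion of good primes, which, if the rest of the argument held, would even improve the final power of $\log N$.

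The genuine gap is the claim that $\sum_{k\in\mathcal K}v_k^2\gtrsim1$ for $\mathcal K=\{k\asymp N^{1/d}\}$. Proposition \ref{Prop Cassels} gives $\sum_{m=M}^{L}v_m^2\geq C_1L^d\sum_ja_j^2-C_0M^d\geq C_1L^d/N-C_0M^d$, which is useful only when $L\gtrsim MN^{1/d}$; with $M\asymp L\asymp N^{1/d}$ the negative term $C_0M^d\asymp N$ swamps the positive term, which is $O(1)$, and the inequality does not rule out that essentially all of the controlled spectral energy sits at frequencies $k\ll N^{1/d}$. You cannot simply lower the bottom of $\mathcal K$ to a constant either, because you also need $k\gg q_H\asymp\log N$ for the $O(k^{-1})$ error in the Jacobi asymptotics to be dominated by $\sin(\pi/q_n)\gtrsim 1/q_n$. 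The paper resolves this tension by taking the band $[c_3H\log H,\ \kappa N^{1/d}\log N]$, which does satisfy $L\gtrsim MN^{1/d}$; the price is that the weight $b_k\asymp k^{-d-1}$ must then be bounded below by its value at $k\asymp N^{1/d}\log N$, and this is where one of the four factors of $\log N$ in the final bound comes from. Once you make this correction your argument closes (and your positive-proportion count would replace the paper's $\tfrac{1}{H^3\log^2H}$ by $\tfrac{1}{H^2\log^2H}$, improving $\tfrac{\log\log N}{\log^4N}$ to roughly $\log^{-3}N$); but as written the localization of the energy to the dyadic band $k\asymp N^{1/d}$ is unjustified, and your concluding appeal to a ``careful accounting of the worst-case placement of the energy'' to produce the factor $\tfrac{\log\log N}{\log^4N}$ does not correspond to an actual estimate.
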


In the next section we recall some preliminaries necessary to work in two-point homogeneous spaces, whereas in Section \ref{section-main-results} we prove our main results.

\medskip

With the notation $A\approx B$, we mean the fact that there exist two constant $c_1$ and $c_2$ independent of the involved variables such that $c_1 A\leq B\leq c_2 A$.

\section{Harmonic analysis preliminaries in two-point homogeneous spaces}\label{section-2}
The preliminaries contained in this section are essentially taken from the literature and from \cite{twopoint}. Hence, we omit most of the proofs and we invite the reader to refer to \cite{twopoint} and the references therein.

In the following, to keep notation
simple, we will use
\[
a =\frac{d-2}{2}, \ \quad\ b =\frac{d_{0}-2}{2}.
\] 
Recall that if $\mathcal M= P^n(\mathbb F)$, then $d=nd_0$, where $d_0=1,2,4,8$ according to the real dimension of $\mathbb F=\mathbb R, \mathbb C,\mathbb H$ and $\mathbb O$ respectively. Instead, if $\mathcal M=\mathcal S^d$ it is convenient to set $d=d_0$.

If $o$ is a fixed point in $\mathcal{M}$, then $\mathcal{M}$ can be identified
with the homogeneous space $G/K$, where $G$ is the group of isometries of
$\mathcal{M}$ and $K$ is the stabilizer of $o$. We will also identify
functions $F(x)$ on $\mathcal{M}$ with right $K$-invariant functions $f(g)$ on
$G$ by setting $f(g)=F(x)$ when $go=x$. If
$\mu$ is the Riemannian measure on $\mathcal{M}$ normalized so that
$\mu(\mathcal{M})=1$,
then $\mu$ is invariant under the action of $G$,
in other words, for every $g\in G$,
\[
\int_{\mathcal{M}}F(gx)d\mu(x)=\int_{\mathcal{M}}F(x)d\mu(x)
\]

\begin{definition}
A function $F$ on $\mathcal{M}$ is a zonal function (with respect to $o$) if for every
$x\in\mathcal{M}$ and every $k\in K$ we have $F( kx) =F( x) $.
We will say that $F$ is zonal with respect to $y$ if $F(gx)$ is a zonal function (with respect to $o$) and $y=go$.
\end{definition}

\begin{lemma}
\label{Lemma misura}Let $F$ be a zonal function. Then $F( x) $ depends only on
$\rho( x,o) $. Furthermore, defining $F_{0}$ so that $F( x) =F_{0}( \rho( x,o)
) $ we have%
\begin{equation}
\int_{\mathcal{M}}F( x) d\mu( x) =\int_{0}^{\pi}F_{0}( r) A( r) dr,
\label{IntegraleZonale}%
\end{equation}
where%
\[
A( r) =c( a,b) \left(  \sin\frac{r}{2}\right)  ^{2a+1}\left(  \cos\frac{r}%
{2}\right)  ^{2b+1}%
\]
and
\[
c( a,b) =\left(  \int_{0}^{\pi}\left(  \sin\frac{r}{2}\right)  ^{2a+1}\left(
\cos\frac{r}{2}\right)  ^{2b+1}dr\right)  ^{-1} =\frac{\Gamma(a+b+2)}%
{\Gamma(a+1)\Gamma(b+1)}.
\]
\end{lemma}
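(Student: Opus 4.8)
The plan is to prove the two assertions separately, drawing the first from the two-point homogeneity of $\mathcal M$ and the second from the geodesic polar decomposition of $\mu$. For the first, suppose $x_1,x_2\in\mathcal M$ satisfy $\rho(x_1,o)=\rho(x_2,o)$. Applying the defining property of a two-point homogeneous space to the quadruple $x_1,o,x_2,o$ produces an isometry $g$ with $gx_1=x_2$ and $go=o$; since $g$ fixes $o$ it belongs to the stabilizer $K$, and zonality of $F$ gives $F(x_2)=F(gx_1)=F(x_1)$. Hence $F$ is constant on each geodesic sphere $\{x:\rho(x,o)=r\}$, and letting $F_0(r)$ denote this common value defines a function on $[0,\pi]$ with $F(x)=F_0(\rho(x,o))$.

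For the integration formula I would pass to geodesic polar coordinates centred at $o$: writing a tangent vector as $r\omega$ with $r\in[0,\pi]$ and $\omega$ in the unit sphere $S^{d-1}\subseteq T_o\mathcal M$, the unnormalized Riemannian volume factors as $J(r,\omega)\,dr\,d\omega$, where $J$ is the Jacobian of $\exp_o$ and $d\omega$ is the surface measure on $S^{d-1}$. The isometries $g\in K$ constructed above act transitively on each geodesic sphere, hence through the isotropy representation transitively on the directions $\omega$; since each such $g$ preserves $\mu$ and fixes $r$, the Jacobian cannot depend on $\omega$, say $J(r,\omega)=J(r)$. Integrating out $\omega$ and renormalizing so that $\mu(\mathcal M)=1$ then yields
\[
\int_{\mathcal M}F(x)\,d\mu(x)=\int_0^\pi F_0(r)\,w(r)\,dr,
\]
with $w(r)$ a positive multiple of $J(r)$; it remains to identify $w$ with $A$.

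The crux, and the step I expect to be the main obstacle, is the shape of $J(r)$, that is, the volume density of geodesic spheres in each of the five families. This is the classical Jacobi field computation for compact rank one symmetric spaces: along a unit-speed geodesic from $o$ the Jacobi fields vanishing at $o$ split into two families governed by the restricted root system, of multiplicities $m_\alpha=d-d_0$ and $m_{2\alpha}=d_0-1$, so that after the normalization $\mathrm{diam}(\mathcal M)=\pi$ the Jacobian $J(r)$ is a constant multiple of
\[
\Big(\sin\tfrac{r}{2}\Big)^{m_\alpha+m_{2\alpha}}\Big(\cos\tfrac{r}{2}\Big)^{m_{2\alpha}}=\Big(\sin\tfrac{r}{2}\Big)^{2a+1}\Big(\cos\tfrac{r}{2}\Big)^{2b+1},
\]
the Euclidean sphere arising as the degenerate case through the convention $d_0=d$; for these explicit multiplicities and densities one may appeal to \cite{Gangolli}, \cite{H} or \cite{Wolf}. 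Thus $w(r)=c\big(\sin\frac{r}{2}\big)^{2a+1}\big(\cos\frac{r}{2}\big)^{2b+1}$ for some $c>0$, and taking $F\equiv1$ forces $\int_0^\pi w(r)\,dr=\mu(\mathcal M)=1$. Evaluating this integral with the substitution $t=r/2$ and the Beta integral $\int_0^{\pi/2}(\sin t)^{2a+1}(\cos t)^{2b+1}\,dt=\tfrac12\Gamma(a+1)\Gamma(b+1)/\Gamma(a+b+2)$ pins down $c=\Gamma(a+b+2)/(\Gamma(a+1)\Gamma(b+1))=c(a,b)$, which establishes $w=A$ and completes the proof.
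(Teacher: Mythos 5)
Your proof is correct. Note that the paper itself does not prove this lemma at all: Section \ref{section-2} explicitly omits the proofs of these preliminaries and defers to \cite{twopoint} and the references therein (e.g.\ \cite{Gangolli}, \cite{H}), and the argument you give --- two-point homogeneity to reduce $F$ to a radial profile, geodesic polar coordinates with a $K$-invariance argument to kill the angular dependence of the Jacobian, the standard rank-one root multiplicities $m_\alpha=d-d_0$, $m_{2\alpha}=d_0-1$ to identify the density $\left(\sin\frac r2\right)^{2a+1}\left(\cos\frac r2\right)^{2b+1}$, and the Beta integral to fix the normalizing constant $c(a,b)$ --- is precisely the standard proof found in those sources, so your attempt matches the intended justification rather than deviating from it.
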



Let $\Delta$ be the Laplace-Beltrami operator on $\mathcal{M}$, let
$\lambda_{0},\lambda_{1},\ldots,$ be the distinct eigenvalues of $-\Delta$
arranged in increasing order, let $\mathcal{H}_{m}$ be the eigenspace
associated with the eigenvalue $\lambda_{m}$, and let $d_{m}$ its dimension.
It is well known that%
\begin{equation}
L^{2}(\mathcal{M})=%
{\displaystyle\bigoplus_{m=0}^{+\infty}}
\mathcal{H}_{m}. \label{Decomp L2}%
\end{equation}
If $F(x)=F_{0}(\rho(x,o))$ is a zonal function on $\mathcal{M}$, then%
\begin{equation}
\Delta F(x)=\left.  \frac{1}{A(t)}\frac{d}{dt}\left(  A(t)\frac{d}{dt}%
F_{0}(t)\right)  \right\vert _{t=\rho(x,o)} \label{Radial Laplace Beltrami}%
\end{equation}
(see (4.16) in \cite{Gangolli}).

\begin{definition}
The zonal spherical function of degree $m\in\mathbb{N}$ with pole
$x\in\mathcal{M}$ is the unique function $Z_{x}^{m}\in\mathcal{H}_{m}$, given
by the Riesz representation theorem, such that for every $Y\in\mathcal{H}_{m}$%
\[
Y( x) =\int_{\mathcal{M}}Y( y) Z_{x}^{m}( y) d\mu( y) .
\]

\end{definition}

The next lemma summarizes the main properties of zonal functions. The case $\mathcal M=\mathcal S^d$ is discussed in detail in \cite{SW}, whereas we refer to \cite[Lemma 6]{twopoint} for the general case.

\begin{lemma}\label{proprieta}
\begin{itemize}

\item[i)] If $Y_{m}^{1},\ldots,Y_{m}^{d_{m}}$ is an orthonormal basis of
$\mathcal{H}_{m}\subset L^{2}( \mathcal{M}) $, then%
\[
Z_{x}^{m}( y) =\sum_{\ell=1}^{d_{m}}\overline{Y_{m}^{\ell}( x) }Y_{m}^{\ell}(
y) .
\]

\item[ii)] $Z_{x}^{m}$ is real valued and $Z_{x}^{m}( y) =Z_{y}^{m}( x) .$

\item[iii)] If $g\in G$, then $Z_{gx}^{m}( gy) =Z_{x}^{m}( y) .$

\item[iv)] $\forall x\in\mathcal M, \ \|Z_x^m\|_\infty= Z_{x}^{m}( x) =\Vert Z_{x}^{m}\Vert_{2}^{2}=d_{m}.$ 

\item[v)] $Z_{o}^{m}( x) $ is a zonal function and
\begin{equation}
Z_{o}^{m}( x) =\frac{d_{m}}{P_{m}^{a,b}( 1) }P_{m}^{a,b}( \cos( \rho( x,o) ) )
\label{Jacobi}%
\end{equation}
where $P_{m}^{a,b}$ are the Jacobi polynomials.

\item[vi)] $\big\{  d_{m}^{-1/2}Z_{o}^{m}\big\}  _{m=0}^{+\infty}$ is an
orthonormal basis of the subspace of $L^{2}(\mathcal{M})$ of zonal functions.

\item[vii)] Let $\mathbb{P}_{m}$ denote the orthogonal projection of $L^{2}(
\mathcal{M}) $ onto $\mathcal{H}_{m}$. Then for every zonal function $f$,
\[
\mathbb P_{m}f(x)=d_m^{-1}\int_{\mathcal M}f(y)Z_o^m(y)d\mu(y)Z_o^m(x).
\]

\item[viii)] $\lambda_{m}=m(m+a+b+1)$.

\item[ix)] We have
\begin{align*}
d_{m}  &  =  (2m+a+b+1) \frac{\Gamma(  b+1)  }{\Gamma(
a+1)  \Gamma(  a+b+2)  }\frac{\Gamma(  m+a+b+1)
}{\Gamma(  m+b+1)  }\frac{\Gamma(  m+a+1)  }%
{\Gamma(  m+1)  } \approx m^{d-1}.%
\end{align*}
\end{itemize}
\end{lemma}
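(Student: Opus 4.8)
The plan is to read Lemma \ref{proprieta} as a catalogue of the standard properties of the reproducing kernel $Z_x^m$ of the eigenspace $\mathcal H_m$, and to prove them in an order in which each feeds into the next, exploiting throughout the \emph{uniqueness} built into the definition: any $W\in\mathcal H_m$ with $Y(x)=\int_{\mathcal M}Y(y)W(y)\,d\mu(y)$ for all $Y\in\mathcal H_m$ must equal $Z_x^m$. I would first dispatch (i) and (ii) together. Since $-\Delta$ is a real self-adjoint elliptic operator, each $\mathcal H_m$ admits a real-valued orthonormal basis $Y_m^1,\dots,Y_m^{d_m}$; expanding an arbitrary $Y\in\mathcal H_m$ as $Y=\sum_\ell(\int Y\,Y_m^\ell\,d\mu)\,Y_m^\ell$ and comparing with the reproducing identity shows that $K_x(y):=\sum_\ell\overline{Y_m^\ell(x)}\,Y_m^\ell(y)$ reproduces, so uniqueness gives $Z_x^m=K_x$, which is (i). Real-valuedness and the symmetry $Z_x^m(y)=Z_y^m(x)$ in (ii) are then immediate from the symmetric bilinear form $\sum_\ell Y_m^\ell(x)Y_m^\ell(y)$ furnished by a real basis.

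Next, (iii) follows from the $G$-invariance of $\Delta$ (hence of each $\mathcal H_m$) and of $\mu$: for $Y\in\mathcal H_m$ one has $Y\circ g^{-1}\in\mathcal H_m$, and the substitution $z=gy$ in $\int Y(y)Z_{gx}^m(gy)\,d\mu(y)$ combined with the reproducing property applied to $Y\circ g^{-1}$ yields $\int Y(y)Z_{gx}^m(gy)\,d\mu(y)=Y(x)$, whence uniqueness gives $Z_{gx}^m(g\,\cdot)=Z_x^m$. Property (iv) is then read off: by (i), $Z_x^m(x)=\sum_\ell|Y_m^\ell(x)|^2$ integrates to $d_m$ and, by (iii) together with the transitivity of $G$ on $\mathcal M$, is constant in $x$, so $Z_x^m(x)=d_m$; applying the reproducing property to $Y=Z_x^m$ gives $\|Z_x^m\|_2^2=Z_x^m(x)=d_m$; and the identity $Z_x^m(y)=\int Z_x^m Z_y^m\,d\mu$ with Cauchy--Schwarz yields $|Z_x^m(y)|\le d_m$, i.e. $\|Z_x^m\|_\infty=Z_x^m(x)$.

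The analytic core is (v) and (viii). Taking $g\in K$ in (iii) shows $Z_o^m$ is $K$-invariant, hence zonal, so by Lemma \ref{Lemma misura} it is a function of $\rho(x,o)$; as an eigenfunction of $\Delta$ its radial profile solves the ODE obtained by equating the right-hand side of \eqref{Radial Laplace Beltrami} to $-\lambda_m$ times itself. The substitution $u=\cos\rho(x,o)$, which turns $A$ into the Jacobi weight $(1-u)^a(1+u)^b$, converts this into the Jacobi differential equation; its polynomial solution regular at both endpoints is $P_m^{a,b}$ with eigenvalue $m(m+a+b+1)$, giving both (viii) and the shape in \eqref{Jacobi}, the constant being fixed by $Z_o^m(o)=d_m$ from (iv) and $P_m^{a,b}(1)$. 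For (vi)–(vii) I would use that distinct eigenspaces are $L^2$-orthogonal and the normalization from (iv), reducing completeness in the zonal subspace to the one-dimensionality of the space of $K$-fixed vectors in each $\mathcal H_m$ (the Gelfand-pair/multiplicity-free property characterizing compact two-point homogeneous spaces); the projection formula (vii) is the single-term expansion in $\{d_m^{-1/2}Z_o^m\}$. Finally (ix) comes from equating the two expressions for $\|Z_o^m\|_2^2$: it equals $d_m$ by (iv), and equals $\frac{d_m^2}{P_m^{a,b}(1)^2}\int_0^\pi P_m^{a,b}(\cos t)^2 A(t)\,dt$ by \eqref{Jacobi} and Lemma \ref{Lemma misura}; after $u=\cos t$ this integral is the classical Jacobi $L^2$-norm, and inserting the known values of $P_m^{a,b}(1)$ and of that norm yields the stated closed form, with $d_m\approx m^{d-1}$ following from Stirling since $2a+1=d-1$.

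I expect the main obstacle to be the structural input underlying (v) and (vi): checking carefully that the radial eigenvalue problem reduces to Jacobi's equation with exactly the parameters $a=(d-2)/2$ and $b=(d_0-2)/2$ (following \eqref{Radial Laplace Beltrami} and \cite{Gangolli}), and that each eigenspace contains, up to scalars, a unique zonal function — the rank-one/two-point-homogeneous feature itself. Once these are in place, the identities in (viii)–(ix) are bookkeeping with standard Jacobi-polynomial formulas, and the remaining items are formal consequences of the reproducing property; the $\mathcal S^d$ case is in \cite{SW} and the general case in \cite{twopoint}, which I would follow.
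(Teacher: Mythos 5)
Your proposal is correct and follows essentially the same route as the paper's own proof (which the paper delegates to \cite{SW} and \cite[Lemma 6]{twopoint}): reproducing-kernel uniqueness arguments for (i)--(iv), the identification of the radial part of the Laplace--Beltrami operator \eqref{Radial Laplace Beltrami} with the Jacobi operator for (v) and (viii), and the evaluation of $\Vert d_m^{-1/2}Z_o^m\Vert_2$ via the formulas (4.1.1) and (4.3.3) of \cite{szego} for (ix). The only divergence is item (vi), where the paper invokes completeness of the Jacobi polynomials $\{P_m^{a,b}(\cos t)\}$ in $L^2((0,\pi),A(t)\,dt)$ while you appeal to the Gelfand-pair/multiplicity-free property; both are standard, and the one-dimensionality of the zonal part of each $\mathcal H_m$ that you need already follows from your own ODE argument in (v), so no extra representation-theoretic input is strictly required.
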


\section{Proof of the main results}\label{section-main-results}
In this section we prove our main results. In order to do so we need some preliminary results taken from \cite{twopoint}. In the following lemma we obtain an identity for the $L^2$ discrepancy which separates the contribution of the geometry of the balls $B_r(x)$ from the one of the sequence $\{x_j\}_{j=1}^N$ and of the weights $\{a_j\}_{j=1}^N$. 
\begin{lemma}
\label{Lemma Quadrato Discrep}Let $D_{r}( x) $ be as in (\ref{Discrepanza M}).
Then%
\[
\int_{\mathcal{M}}\left\vert D_{r}( x) \right\vert ^{2}d\mu( x) =\sum
_{m=1}^{+\infty}\sum_{\ell=1}^{d_{m}}\left\vert \sum_{j=1}^{N}a_{j}Y_{m}%
^{\ell}( x_{j}) \right\vert ^{2}d_{m}^{-2}\left\vert \int_{B_{r}( o) }%
Z_{o}^{m}( y) d\mu( y) \right\vert ^{2}.
\]

\end{lemma}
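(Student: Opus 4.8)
The plan is to expand the discrepancy $D_r(x)$ in the orthonormal basis of spherical harmonics and use Parseval. Recall from \eqref{Discrepanza M} that
\[
D_r(x)=\sum_{j=1}^N a_j\chi_{B_r(x)}(x_j)-\mu(B_r(x)).
\]
The key observation is that both terms can be written as integrals against the indicator of a ball, which is a zonal object. Specifically, using the symmetry $Z_x^m(y)=Z_y^m(x)$ from Lemma \ref{proprieta}(ii) and the fact that $y\in B_r(x)$ iff $\rho(x,y)<r$, I would write $\chi_{B_r(x)}(x_j)$ in terms of the kernel built from the $Z^m$. The first step, therefore, is to record the spherical-harmonic expansion of the function $x\mapsto D_r(x)$ as an element of $L^2(\mathcal M)$, computing its Fourier coefficient against each basis element $Y_m^\ell$.

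Concretely, I would compute $\int_{\mathcal M} D_r(x)\overline{Y_m^\ell(x)}\,d\mu(x)$ for $m\geq 1$ (the $m=0$ term, corresponding to constants, cancels because $\sum_j a_j=1=\mu(\mathcal M)$, which is exactly why the sum in the statement starts at $m=1$). For the point-mass part, I would use Fubini together with the identity $\chi_{B_r(x)}(x_j)=\chi_{B_r(x_j)}(x)$ (balls are symmetric under swapping center and point) to get
\[
\int_{\mathcal M}\Big(\sum_j a_j\chi_{B_r(x)}(x_j)\Big)\overline{Y_m^\ell(x)}\,d\mu(x)=\sum_j a_j\int_{B_r(x_j)}\overline{Y_m^\ell(x)}\,d\mu(x).
\]
The inner integral I would rewrite as an integral over $B_r(o)$ by applying an isometry $g$ with $go=x_j$ and invoking $G$-invariance of $\mu$ together with Lemma \ref{proprieta}(iii); combined with the reproducing property of $Z_o^m$ (and its expansion in part (i)), this produces a factor $\overline{Y_m^\ell(x_j)}$ times $d_m^{-1}\int_{B_r(o)}Z_o^m(y)\,d\mu(y)$. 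Summing over $j$ gives a Fourier coefficient of the shape $\big(\sum_j a_j\overline{Y_m^\ell(x_j)}\big)\,d_m^{-1}\int_{B_r(o)}Z_o^m$. Then Parseval, $\int_{\mathcal M}|D_r|^2=\sum_{m\geq 1}\sum_{\ell=1}^{d_m}|\widehat{D_r}(m,\ell)|^2$, yields exactly the claimed identity, after noting $|\overline{c}|^2=|c|^2$ to turn the conjugated coefficient sum into $\big|\sum_j a_j Y_m^\ell(x_j)\big|^2$.

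The main obstacle, and the step demanding the most care, is the reduction of $\int_{B_r(x_j)}\overline{Y_m^\ell}\,d\mu$ to a clean product that separates the geometric factor $\int_{B_r(o)}Z_o^m$ from the sequence-dependent factor. This is where the $G$-invariance, the transformation rule $Z_{gx}^m(gy)=Z_x^m(y)$, and the reproducing identity $Y(x)=\int_{\mathcal M}Y(y)Z_x^m(y)\,d\mu(y)$ must be combined correctly so that the resulting expression depends on $x_j$ only through $Y_m^\ell(x_j)$ and on the radius only through the single scalar $\int_{B_r(o)}Z_o^m(y)\,d\mu(y)$. Everything else is bookkeeping: checking the $m=0$ cancellation, confirming the $d_m^{-2}$ normalization (which comes from squaring a $d_m^{-1}$ factor), and verifying that the interchange of summation and integration is justified since all sums are finite over $j$ and $L^2$-convergent over $m,\ell$.
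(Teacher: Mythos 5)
Your proposal is correct and follows essentially the same route as the paper: decompose $D_r$ over the eigenspaces $\mathcal H_m$, use the zonality of $\chi_{B_r(x_j)}$ with respect to $x_j$ together with the isometry invariance of $\mu$ and the transformation rule $Z_{gx}^m(gy)=Z_x^m(y)$ to reduce each Fourier coefficient to $\bigl(\sum_j a_j\overline{Y_m^\ell(x_j)}\bigr)\,d_m^{-1}\int_{B_r(o)}Z_o^m\,d\mu$, observe that the $m=0$ contribution vanishes, and conclude by Parseval. The only cosmetic difference is that the paper first assembles the full projection $\mathbb P_m D_r$ via Lemma \ref{proprieta}(vii) and then expands it in the basis $\{Y_m^\ell\}$, whereas you compute the individual coefficients directly; the two computations are identical in content.
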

\begin{proof}
By (\ref{Decomp L2}) we have%
\[
\int_{\mathcal{M}}\left\vert D_{r}(x)\right\vert ^{2}d\mu(x)=\sum
_{m=0}^{+\infty}\int_{\mathcal{M}}|\mathbb{P}_{m}D_{r}(x)|^{2}d\mu(x)
\]
Since $o$ is arbitrary and $\chi_{B_{r}(x_{j})}$ is zonal with respect to $x_j$, applying (vii) of Lemma \ref{proprieta} with $o$ substituted by $x_j$, we have
\[\mathbb{P}_{m}\left(\chi_{B_{r}(\cdot)}(x_{j})\right)(x)=\mathbb{P}_{m}\left(\chi_{B_{r}(x_j)}\right)(x)=d_m^{-1}\int_{\mathcal M} \chi_{B_{r}(x_j)}(y)Z_{x_j}^m(y)d\mu(y)Z_{x_j}^m(x).\]
Overall, applying (iii) of Lemma \ref{proprieta} we obtain%
\[
\mathbb{P}_{m}D_{r}(x)=\sum_{j=1}^{N}a_{j}d_{m}^{-1}\int_{B_{r}(o)}Z_{o}%
^{m}(y)d\mu(y)Z_{x_{j}}^{m}(x)-\delta_{0}(m)\mu(B_{r}(o)),
\]
where $\delta_0(m)$ is the Kronecker delta. In particular $\mathbb{P}_{0}D_{r}(x)=0$ and for $m>0$%
\begin{align*}
\mathbb{P}_{m}D_{r}(x)  &  =d_{m}^{-1}\sum_{j=1}^{N}a_{j}\int_{B_{r}(o)}%
Z_{o}^{m}(y)d\mu(y)Z_{x_{j}}^{m}(x)\\
&  =d_{m}^{-1}\sum_{\ell=1}^{d_{m}}\left(  \sum_{j=1}^{N}a_{j}\overline
{Y_{m}^{\ell}(x_{j})}\right)  \int_{B_{r}(o)}Z_{o}^{m}(y)d\mu(y)Y_{m}^{\ell
}(x).
\end{align*}
Finally,
\[
\int_{\mathcal{M}}|D_{r}(x)|^{2}d\mu(x)=\sum_{m=1}^{+\infty}\sum_{\ell
=1}^{d_{m}}\left\vert \sum_{j=1}^{N}a_{j}Y_{m}^{\ell}(x_{j})\right\vert
^{2}d_{m}^{-2}\left\vert \int_{B_{r}(o)}Z_{o}^{m}(y)d\mu(y)\right\vert ^{2}.
\]

\end{proof}

It is clear from the above lemma that we now need to estimate the quantities%
\begin{equation}
\label{twoquant}
\sum_{\ell=1}^{d_{m}}\left\vert \sum_{j=1}^{N}a_{j}Y_{m}^{\ell}(x_{j}%
)\right\vert ^{2} \quad\text{and } \quad d_{m}^{-2}\left\vert \int_{B_{r}%
(o)}Z_{o}^{m}(y)d\mu(y)\right\vert ^{2}.
\end{equation}
The first of these quantities is controlled with a Cassels-Montgomery-type estimate in the following proposition.

\begin{proposition}
\label{Prop Cassels} There exist $C_{0},C_{1}>0$ such that
for every $L\geq M>0$ we have%
\[
\sum_{m=M}^{L}\sum_{\ell=1}^{d_{m}}\left\vert \sum_{j=1}^{N}a_{j}%
Y_{m}^{\ell}(x_{j})\right\vert ^{2}\geq C_{1}\sum_{j=1}^{N}a_{j}^{2}%
L^{d}-C_{0}M^d.
\]

\end{proposition}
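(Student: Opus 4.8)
The plan is to reduce the whole estimate to a single \emph{diagonal domination} inequality driven by a nonnegative summability kernel, which is the usual mechanism behind Cassels--Montgomery type bounds. First I would rewrite each frequency block. Expanding the square and using part (i) of Lemma \ref{proprieta} together with the fact that $Z_x^m$ is real and symmetric (part (ii)), one gets, for every $m$,
\[
S_m:=\sum_{\ell=1}^{d_m}\Bigl|\sum_{j=1}^N a_j Y_m^\ell(x_j)\Bigr|^2=\sum_{j,k=1}^N a_j a_k\, Z_{x_j}^m(x_k).
\]
This already records two elementary facts that I will use repeatedly: $S_m\geq 0$, and, since $\|Z_x^m\|_\infty=Z_x^m(x)=d_m$ by part (iv) while the weights are nonnegative with $\sum_j a_j=1$, also $S_m\leq d_m\sum_{j,k}a_ja_k=d_m$.

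Next I would introduce a nonnegative zonal \emph{Fej\'er (Ces\`aro) kernel} adapted to the eigenfunction expansion: coefficients $\{w_m\}_{m=0}^{L}$ with $0\leq w_m\leq 1$ such that the zonal function
\[
W_L(x,y)=\sum_{m=0}^{L} w_m\, Z_x^m(y)
\]
is nonnegative for all $x,y\in\mathcal M$, and such that $\sum_{m=0}^{L} w_m d_m\geq c\,L^d$ for a constant $c>0$ depending only on $\mathcal M$. Because $W_L\geq 0$ and the weights $a_j$ are nonnegative, all off-diagonal terms of the associated quadratic form are nonnegative, so it is controlled from below by its diagonal (here I use $Z_x^m(x)=d_m$ once more):
\[
\sum_{m=0}^{L} w_m S_m=\sum_{j,k=1}^N a_ja_k\, W_L(x_j,x_k)\geq \sum_{j=1}^N a_j^2\, W_L(x_j,x_j)=\sum_{j=1}^N a_j^2\sum_{m=0}^{L} w_m d_m\geq c\,L^d\sum_{j=1}^N a_j^2 .
\]

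Assembling is then routine. Since $0\leq w_m\leq 1$ and $S_m\geq 0$, I can pass from the weighted to the unweighted block sum on $[M,L]$ and discard only the low frequencies:
\[
\sum_{m=M}^{L} S_m\geq \sum_{m=M}^{L} w_m S_m=\sum_{m=0}^{L} w_m S_m-\sum_{m=0}^{M-1} w_m S_m\geq c\,L^d\sum_{j=1}^N a_j^2-\sum_{m=0}^{M-1} S_m .
\]
Finally the low-frequency remainder is crude but harmless: by $S_m\leq d_m$ and the dimension count $d_m\approx m^{d-1}$ of part (ix) one has $\sum_{m=0}^{M-1} S_m\leq \sum_{m=0}^{M-1} d_m\approx M^d$. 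This produces exactly $C_1 L^d\sum_j a_j^2-C_0 M^d$.

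The genuine content, and the step I expect to be the main obstacle, is the construction of the nonnegative kernel $W_L$ with coefficients trapped in $[0,1]$ and with $\sum_{m}w_m d_m\gtrsim L^d$. By part (v) this is equivalent to the nonnegativity on $[-1,1]$ of a Ces\`aro mean of a Jacobi expansion with parameters $a=(d-2)/2$ and $b=(d_0-2)/2$; since $a,b\geq-1/2$ in all our cases, one may take Ces\`aro weights of a fixed (large enough) order $\delta$, whose nonnegativity is classical (Fej\'er, Gasper). Such weights automatically satisfy $0\le w_m\le 1$ and, being comparable to $(1-m/L)^{\delta}$, give $\sum_{m\le L} w_m d_m\approx L^d\int_0^1(1-s)^{\delta}s^{d-1}\,ds\approx L^d$, as required. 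Alternatively this kernel, in precisely the form needed, can be imported from \cite{twopoint} (with its eigenfunction version in \cite{BGG}); everything else above is the standard positive-kernel diagonalisation.
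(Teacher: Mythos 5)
Your argument is correct, and the second half (controlling $\sum_{m=0}^{M-1}S_m$ by $\sum_{m=0}^{M-1}d_m\lesssim M^d$ via $|Z_{x_j}^m(x_k)|\le \|Z_x^m\|_\infty=d_m$ and $\sum_j a_j=1$) is exactly what the paper does. Where you diverge is the main lower bound $\sum_{m=0}^{L}S_m\ge C_1 L^d\sum_j a_j^2$: the paper obtains this in one line by invoking the Cassels--Montgomery inequality for eigenfunctions of the Laplacian on general compact Riemannian manifolds (\cite[Theorem 1]{BGG}) together with $\sum_{m\le L}d_m\approx L^d$, whereas you reprove it from scratch in the special setting of two-point homogeneous spaces by the positive-kernel diagonalisation: a nonnegative zonal Ces\`aro kernel $W_L=\sum_{m\le L}w_mZ_x^m(\cdot)$ with $0\le w_m\le1$ and $W_L(x,x)=\sum_m w_md_m\gtrsim L^d$, dropping the (nonnegative) off-diagonal terms. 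This is Montgomery's original Fej\'er-kernel mechanism transplanted to Jacobi expansions, and the one nontrivial input --- nonnegativity of Ces\`aro means of sufficiently high fixed order for Jacobi expansions with $\alpha\ge\beta\ge-1/2$ (Kogbetliantz, Gasper) --- is classical and applies here since $a=(d-2)/2\ge b=(d_0-2)/2\ge-1/2$. The trade-off is clear: your route is self-contained and elementary but only works because the space has a zonal reproducing-kernel structure with an explicit positive summability method; the paper's citation of \cite{BGG} is shorter and rests on a result valid on arbitrary compact manifolds, where no such positive zonal kernel is available. Both yield the same constants up to renaming, so the proposition follows either way.
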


\begin{proof}
By the Cassels-Montgomery inequality for manifolds (see \cite[Theorem 1]{BGG})
along with the fact that \[\sum_{m=0}^{L}d_{m}\approx\sum_{m=0}^{L}%
m^{d-1}\approx L^{d},\] we have%
\[
\sum_{m=0}^{L}\sum_{\ell=1}^{d_{m}}\left\vert \sum_{j=1}^{N}a_{j}Y_{m}^{\ell
}(x_{j})\right\vert ^{2}\geq C_{1}\sum_{j=1}^{N}a_{j}^{2}L^{d}.
\]
Since, by Lemma \ref{proprieta} (iv) $Z_{x_j}^m(x_k)\leq d_m$,
\begin{align*}
\sum_{m=0}^{M-1}\sum_{\ell=1}^{d_{m}}\left\vert \sum_{j=1}^{N}a_{j}%
Y_{m}^{\ell}(x_{j})\right\vert ^{2}  & = \sum
_{m=0}^{M-1}\sum_{j,k=1}^{N}a_{j}a_k Z^m_{x_j}(x_k)\leq \sum_{m=0}^{M-1} d_m\leq C_0 M^d,
\end{align*}
and the proposition follows. 
\end{proof}
Moving now to the second quantity in \eqref{twoquant}, in the next lemma we obtain an identity for the integral of zonal spherical functions on a ball.
\begin{lemma}
\label{Trasf bolla}For any $0\leq r\leq\pi$ and for any
$m\geq1$ we have%
\[
\int_{B_{r}(o)}Z_{o}^{m}(x)d\mu(x)=\frac{c( a,b) d_{m}}{mP_{m}^{a,b}%
(1)}P_{m-1}^{a+1,b+1}(\cos r)\left(  \sin  \frac{r}{2}  \right)
^{2a+2}\left(  \cos  \frac{r}{2}  \right)  ^{2b+2}.
\]

\end{lemma}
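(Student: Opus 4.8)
The plan is to reduce the ball integral to a one-dimensional Jacobi-weighted integral and then recognize the integrand as an exact derivative. Since $Z_o^m$ is zonal, Lemma \ref{proprieta}(v) gives $Z_o^m(x)=\frac{d_m}{P_m^{a,b}(1)}P_m^{a,b}(\cos\rho(x,o))$, so the integration formula \eqref{IntegraleZonale} of Lemma \ref{Lemma misura} turns the ball integral into a radial one:
\[
\int_{B_r(o)}Z_o^m(x)\,d\mu(x)=\frac{c(a,b)d_m}{P_m^{a,b}(1)}\int_0^r P_m^{a,b}(\cos t)\left(\sin\tfrac{t}{2}\right)^{2a+1}\left(\cos\tfrac{t}{2}\right)^{2b+1}\,dt.
\]

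Next I would substitute $x=\cos t$. Using $1-\cos t=2\sin^2(t/2)$, $1+\cos t=2\cos^2(t/2)$ and $dx=-2\sin(t/2)\cos(t/2)\,dt$, one checks that $\left(\sin\frac t2\right)^{2a+1}\left(\cos\frac t2\right)^{2b+1}dt=-2^{-(a+b+1)}(1-x)^a(1+x)^b\,dx$, so the radial integral becomes $2^{-(a+b+1)}\int_{\cos r}^1(1-x)^a(1+x)^bP_m^{a,b}(x)\,dx$, i.e. a truncated integral against the Jacobi weight $(1-x)^a(1+x)^b$.

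The key step is to antidifferentiate the Jacobi weight times $P_m^{a,b}$. For this I will invoke the classical differentiation formula for Jacobi polynomials (Szeg\H{o}, formula (4.10.1)), which with parameters $(\alpha,\beta,n)=(a+1,b+1,m-1)$ reads
\[
\frac{d}{dx}\Big[(1-x)^{a+1}(1+x)^{b+1}P_{m-1}^{a+1,b+1}(x)\Big]=-2m\,(1-x)^a(1+x)^bP_m^{a,b}(x).
\]
Thus $(1-x)^a(1+x)^bP_m^{a,b}(x)$ is, up to the factor $-1/(2m)$, an exact derivative, and the fundamental theorem of calculus evaluates the truncated integral. At the upper endpoint $x=1$ the boundary term vanishes because $a+1=d/2>0$ kills the factor $(1-x)^{a+1}$; at the lower endpoint $x=\cos r$ it produces $(1-\cos r)^{a+1}(1+\cos r)^{b+1}P_{m-1}^{a+1,b+1}(\cos r)$. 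Rewriting $(1-\cos r)^{a+1}=2^{a+1}(\sin\frac r2)^{2a+2}$ and $(1+\cos r)^{b+1}=2^{b+1}(\cos\frac r2)^{2b+2}$ and collecting all the powers of $2$ (the factor $2^{a+b+2}$ cancels against $2^{-(a+b+1)}/2$, leaving $1/m$) yields precisely the claimed expression.

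The only real obstacle is locating the correct Jacobi identity with the right normalizing constant; once the derivative formula is in hand, the rest is bookkeeping of powers of $2$ and is routine. A quick sanity check (e.g. $\alpha=\beta=1$, $n=0$, where both sides of the differentiation formula equal $-2x$) confirms the constant $-2m$ used above, and hence the final normalization.
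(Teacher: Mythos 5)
Your proof is correct and follows essentially the same route as the paper: reduce to the radial integral via Lemma \ref{Lemma misura} and \eqref{Jacobi}, substitute $x=\cos t$, and apply the Jacobi differentiation/Rodrigues identity $P_{m}^{a,b}(x)(1-x)^{a}(1+x)^{b}=-\frac{1}{2m}\frac{d}{dx}\bigl(P_{m-1}^{a+1,b+1}(x)(1-x)^{a+1}(1+x)^{b+1}\bigr)$, which is exactly the identity the paper invokes (cited there as Szeg\H{o} (4.3.1)). Your bookkeeping of the powers of $2$ and the vanishing of the boundary term at $x=1$ are both right.
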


\begin{proof}
By (\ref{IntegraleZonale}) and (\ref{Jacobi}), 
\begin{align*}
\int_{B_{r}(o)}Z_{o}^{m}(x)d\mu(x)  &  =\frac{d_{m}}{P_{m}^{a,b}(1)}\int
_{0}^{r}P_{m}^{a,b}(\cos t)A(t)dt\\
&  =\frac{c(a,b)d_{m}}{2^{a+b+1}P_{m}^{a,b}(1)}\int_{\cos r}^{1}P_{m}%
^{a,b}(x)(1-x)^{a}(1+x)^{b}dx,
\end{align*}
and the thesis follows applying  Rodrigues' formula (see \cite[(4.3.1)]{szego})
\[P_{m}^{a,b}(x)(1-x)^{a}(1+x)^{b}=-\frac{1}{2m}\frac{d}%
{dx}\left(  P_{m-1}^{a+1,b+1}(x)(1-x)^{a+1}(1+x)^{b+1}\right).
\]
\end{proof}

By Lemma \ref{Lemma Quadrato Discrep} we need to estimate 
\begin{equation}\label{int}
\left\vert \int_{B_{r}(o)}Z_{o}^{m}(y)d\mu(y)\right\vert ^{2}.
\end{equation}
However, it is clear from the previous lemma that this quantity vanishes when $\cos r$ is a zero of the Jacobi polynomial $P_{m-1}^{a+1,b+1}$. On the other hand, since (see \cite[formula (4.3.3)]{szego})
\[\int_0^\pi \!\!P_m^{\alpha,\beta}(\cos r)^2 \left(\sin \frac r2 \right)^{2\alpha+1}\!\!\left( \cos \frac r2\right)^{2\beta+1}\!\!dr=\frac{\Gamma(m+\alpha+1)\Gamma(m+\beta+1)}{{(2m+\alpha+\beta+1)}\Gamma(m+1)\Gamma(m+\alpha+\beta+1)}\geq \frac{C}{m},\]
we would expect that, on average, $|P_m^{\alpha,\beta}(\cos r)|\geq Cm^{-1/2}$. The following theorem identifies the values of $r$ for which this relation holds. 

\begin{theorem}
\label{jacobadly}
Let $\alpha>-1$ and $\beta>-1$, and call $\gamma=(\alpha+\beta+1)/2$ and $\delta=-(2\alpha-1)/4$. Let $r\in\mathbb R\setminus \pi\mathbb Z$. There exist positive constants $C$ and $m_{0}$ such
that for $m\geq m_{0}$,%
\[
\left|P_m^{\alpha,\beta}(\cos r)\right|\geq Cm^{-\frac12}
\]
if and only if $r/\pi=p/q\in\mathbb Q\setminus\mathbb Z$, with $p$ and $q$ coprime, and 
\begin{equation}
\label{cccond}
\gamma p+\delta q\notin\mathbb Z.
\end{equation}
\end{theorem}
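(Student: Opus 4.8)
The plan is to read the required lower bound off the classical asymptotic expansion of Jacobi polynomials (Szegő, \cite[Theorem 8.21.8]{szego}), which for $0<\theta<\pi$ states
\[
P_m^{\alpha,\beta}(\cos\theta)=\frac{\cos\big((m+\gamma)\theta-(2\alpha+1)\tfrac\pi4\big)}{\sqrt{\pi m}\,(\sin\tfrac\theta2)^{\alpha+1/2}(\cos\tfrac\theta2)^{\beta+1/2}}+O(m^{-3/2}),
\]
uniformly on compact subsets of $(0,\pi)$, with $\gamma=(\alpha+\beta+1)/2$. Since $P_m^{\alpha,\beta}(\cos r)$ depends only on $\cos r$ and $\cos$ is even and $2\pi$-periodic, I would first reduce to $r\in(0,\pi)$, i.e.\ to $0<p<q$. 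For such $r$ the amplitude $\big(\sqrt\pi\,(\sin\tfrac r2)^{\alpha+1/2}(\cos\tfrac r2)^{\beta+1/2}\big)^{-1}$ is a fixed positive constant, so the whole question reduces to the size of the cosine factor: writing $\theta_m=(m+\gamma)r-(2\alpha+1)\tfrac\pi4$, the bound $|P_m^{\alpha,\beta}(\cos r)|\geq Cm^{-1/2}$ holds for all large $m$ \emph{if and only if} $|\cos\theta_m|$ stays bounded away from $0$ for all large $m$, the error $O(m^{-3/2})$ being of strictly smaller order and hence harmless in both directions.

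Next I would analyze the phase arithmetically. Dividing by $\pi$ and using the identity $-(2\alpha+1)/4-1/2=\delta-1$, one obtains $\theta_m/\pi=\tfrac{mp}{q}+\tfrac{\gamma p+\delta q}{q}-\tfrac12\pmod 1$, so that $\cos\theta_m=0$ exactly when $mp+\gamma p+\delta q\in q\mathbb Z$. In the rational case $r/\pi=p/q$ the value $\cos\theta_m$ depends only on $m\bmod q$ (the map $m\mapsto mp/q\bmod1$ has period $q$), hence it runs through a finite set; thus $|\cos\theta_m|$ is bounded below for large $m$ precisely when none of these values vanishes, i.e.\ when $mp+\gamma p+\delta q\notin q\mathbb Z$ for every integer $m$. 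Because $\gcd(p,q)=1$, the residues $mp\bmod q$ exhaust $\mathbb Z/q\mathbb Z$, so such an offending $m$ exists if and only if $\gamma p+\delta q$ is itself an integer (integrality is forced, since $mp$ and $q\mathbb Z$ are integers). This gives the stated equivalence for rational $r/\pi$: the bound holds iff $\gamma p+\delta q\notin\mathbb Z$, which is condition \eqref{cccond}.

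It remains to prove the necessity of rationality, completing the ``only if'' direction. When $\gamma p+\delta q\in\mathbb Z$ there is an entire arithmetic progression $m\equiv m_0\pmod q$ on which $\cos\theta_m=0$ exactly, so the expansion yields $|P_m^{\alpha,\beta}(\cos r)|=O(m^{-3/2})=o(m^{-1/2})$ along it and no $C>0$ can work. When $r/\pi$ is irrational I would invoke Weyl's equidistribution criterion: the sequence $\theta_m/\pi=m(r/\pi)+\mathrm{const}\pmod1$ is equidistributed, hence comes within any $\varepsilon$ of $\tfrac12\pmod1$ for infinitely many $m$; for those $m$ one has $|\cos\theta_m|<\varepsilon$ and therefore $m^{1/2}|P_m^{\alpha,\beta}(\cos r)|\leq C'\varepsilon+o(1)$, forcing $\liminf_m m^{1/2}|P_m^{\alpha,\beta}(\cos r)|=0$. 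Thus the bound fails whenever $r/\pi\notin\mathbb Q$ or \eqref{cccond} is violated.

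The main obstacle is the bookkeeping that turns ``$\cos\theta_m=0$ for some $m$'' into the clean arithmetic statement $\gamma p+\delta q\in\mathbb Z$: one must track the half-integer phase shift $-(2\alpha+1)\pi/4$ coming from the Bessel-type asymptotics and verify that it combines with the reduction $-\tfrac12$ to reproduce exactly $\delta=-(2\alpha-1)/4$, and one must ensure that the reduction to $r\in(0,\pi)$ (equivalently $0<p<q$) is precisely what keeps the amplitude a genuine nonzero constant and Szegő's expansion valid. Everything else — the period-$q$ dependence of $\cos\theta_m$, the use of $\gcd(p,q)=1$ to hit every residue, and the equidistribution argument in the irrational case — is routine once the phase has been normalized correctly.
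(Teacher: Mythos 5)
Your proposal is correct and follows essentially the same route as the paper: both read the bound off Szeg\H{o}'s asymptotic expansion \cite[Theorem 8.21.8]{szego}, dispose of the irrational case by density/equidistribution of $mr$ modulo $\pi$, and in the rational case reduce the vanishing of the cosine factor to the congruence $pm+\gamma p+\delta q\in q\mathbb Z$, which by coprimality of $p$ and $q$ (B\'ezout) has a solution $m$ exactly when $\gamma p+\delta q\in\mathbb Z$. Your phase normalization $\theta_m/\pi\equiv \tfrac{mp}{q}+\tfrac{\gamma p+\delta q}{q}-\tfrac12 \pmod 1$ matches the paper's computation, so there is nothing to add.
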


Notice that if $r\in\pi\mathbb Z$, then the situation is somewhat particular. More precisely $|P_m^{\alpha,\beta}(\cos r)|=|P_m^{\alpha,\beta}(\pm1)|$ and this is $|{m+\alpha \choose m}|\approx m^\alpha$ or $|(-1)^m{m+\beta \choose m}|\approx m^\beta$.

\begin{proof}
Remember that for $r\notin \pi\mathbb Z$ (see \cite[Theorem 8.21.8]{szego})
\begin{align*}
&P_{m}^{\alpha,\beta}(\cos r)\\
&=m^{-\frac{1}{2}}\pi^{-\frac{1}{2}}\left(\sin \frac{r}{2} \right)^{-\alpha-\frac{1}{2}}\left(\cos\frac{r}{2} \right)^{-\beta-\frac{1}{2}}\cos\left(\Big(m+\frac{\alpha+\beta+1}{2} \Big)r-\frac{2\alpha+1}{4}\pi \right)+O(m^{-\frac{3}{2}}).
\end{align*}
We are now looking for values of $r$ for which there exists $\eta>0$ and $m_0\ge1$ such that
\begin{equation}
\label{coseno1}
\left|\cos\left(\Big(m+\frac{\alpha+\beta+1}{2}\Big)r-\frac{2\alpha+1}{4}\pi \right)\right|\geq \eta
\end{equation}
for all integer values of $m\geq m_0$. If $r/\pi\notin \mathbb Q$ then $\{mr:m\geq m_0\}$ is dense $\text{mod }\pi$ in $[0,\pi]$ and condition \eqref{coseno1} cannot be achieved. Assume now that $r/\pi=p/q\in\mathbb Q$ with $p$ and $q$ coprime. Then condition \eqref{coseno1} is equivalent to asking that, for all $m=1,\ldots,q$,
\[
\left(m+\frac{\alpha+\beta+1}{2}\right)\frac pq-\frac{2\alpha+1}{4}+\frac 12\notin\mathbb Z,
\]
(notice that the above condition is $q$-periodic in $m$) or equivalently
\begin{equation}\label{cond11}
Q=pm+\frac{\alpha+\beta+1}{2}p-\frac{2\alpha-1}4q=pm+\gamma p+\delta q \quad \text{is not a multiple of } q.
\end{equation}
Also, \eqref{cond11}    is equivalent to require that
\begin{equation*}
H=\gamma p+\delta q\notin\mathbb Z.
\end{equation*}
Indeed, if $H$ is not an integer, then $Q$ is not an integer either and \eqref{cond11} holds. Conversely, assume that $H$ is an integer. Then, since $p$ and $q$ are coprime, the equation $pm+qj=-H$ has integer solutions, and \eqref{cond11} does not hold. 
\end{proof}

According to the values of $\gamma$ and $\delta$, it may be the case that condition \eqref{cccond} holds for any possible choice of coprime $p$ and $q$, for particular values of $p$ and $q$ or for no values of $p$ and $q$. The following proposition shows all the  possible different cases.

\begin{proposition}
    \label{jacobcor}
    Let $p/q\in\mathbb{Q}\setminus\mathbb{Z}$. Then we have the following.            
\begin{enumerate}[(i)]
    \item Suppose $1, \gamma, \delta$ linearly independent over $\mathbb Q$. Then \eqref{cccond} holds for any choice of coprime $p$ and $q$.
    \item Suppose $\gamma$ and $\delta$ irrational, there exist integers $j_1$, $j_2$ and $j_3$ with no common divisors such that $j_1$ and $j_2$ have a nontrivial common divisor, and $j_1\gamma+j_2\delta+j_3=0$. Then \eqref{cccond} holds for any choice of coprime $p$ and $q$.
    \item Suppose $\gamma$ and $\delta$ irrational, there exist integers $j_1$, $j_2$ and $j_3$  such that $j_1$ and $j_2$ have no nontrivial common divisors and $j_1\gamma+j_2\delta+j_3=0$. Then \eqref{cccond} holds for any choice of coprime $p$ and $q$, except for $p/q=j_1/j_2$.
    \item Suppose $\gamma$ rational and $\delta$ irrational, or viceversa. Then \eqref{cccond} holds for any choice of coprime $p$ and $q$.
    \item Suppose $\gamma$ and $\delta$ are integers. Then \eqref{cccond} does not hold for any choice of coprime $p$ and $q$.
    \item Suppose that both $\gamma$ and $\delta$ are rational, but at least one of them is not integer. Then there exist coprime $p$ and $q$ such that \eqref{cccond} is achieved. 
\end{enumerate}
\end{proposition}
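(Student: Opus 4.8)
The plan is to determine, in each case, precisely which admissible pairs $(p,q)$ render $H:=\gamma p+\delta q$ an integer, since \eqref{cccond} fails exactly on that set. Throughout I use that $p/q\in\mathbb{Q}\setminus\mathbb{Z}$ with $\gcd(p,q)=1$ forces $q\geq 2$ and $p\neq 0$. The engine for the cases in which $\gamma$ and $\delta$ are irrational is a Cramer-type observation: when $j_1\gamma+j_2\delta+j_3=0$ holds together with a putative $\gamma p+\delta q=n\in\mathbb{Z}$, these are two linear equations in the unknowns $\gamma,\delta$ with integer coefficients and determinant $pj_2-qj_1$. If that determinant were nonzero, solving the system would exhibit $\gamma=(nj_2+j_3q)/(pj_2-qj_1)$ as a rational number, contradicting irrationality. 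Hence an integer value of $H$ can only occur when $pj_2=qj_1$, that is when $p/q=j_1/j_2$ (note that $j_1,j_2\neq 0$, which is automatic once $\gamma,\delta$ are both irrational).

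Cases (i), (iv), and (v) are then immediate. In (i), an equality $\gamma p+\delta q=n$ is a nontrivial $\mathbb{Q}$-linear relation among $1,\gamma,\delta$ (nontrivial because $p\neq 0$), contradicting their independence, so \eqref{cccond} holds for every admissible pair. In (iv), if say $\gamma\in\mathbb{Q}$ and $\delta\notin\mathbb{Q}$, then $\gamma p+\delta q=n$ would give $\delta=(n-\gamma p)/q\in\mathbb{Q}$, impossible, and the reversed subcase is symmetric. In (v), $H=\gamma p+\delta q$ is manifestly an integer for all $(p,q)$, so \eqref{cccond} can never hold.

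For (ii) and (iii) I feed the Cramer reduction into a short divisibility computation. Any bad pair must satisfy $p/q=j_1/j_2$. In case (iii), where $\gcd(j_1,j_2)=1$, coprimality forces $(p,q)=\pm(j_1,j_2)$, and there $H=\pm(\gamma j_1+\delta j_2)=\mp j_3\in\mathbb{Z}$; thus \eqref{cccond} fails exactly at $p/q=j_1/j_2$ and holds at every other admissible pair. In case (ii), write $j_1=gj_1'$, $j_2=gj_2'$ with $g=\gcd(j_1,j_2)>1$ and $\gcd(j_1',j_2')=1$; then $p/q=j_1/j_2=j_1'/j_2'$ in lowest terms forces $(p,q)=\pm(j_1',j_2')$, whence $H=\pm(\gamma j_1'+\delta j_2')=\pm\tfrac1g(\gamma j_1+\delta j_2)=\mp j_3/g$. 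The decisive point is that $g\nmid j_3$: were $g$ to divide $j_3$ as well, it would divide $\gcd(j_1,j_2,j_3)=1$, impossible for $g>1$. Hence $H\notin\mathbb{Z}$ even at the sole candidate pair, so \eqref{cccond} holds for every admissible $(p,q)$.

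The remaining case (vi), the only existence assertion, is where I expect the main effort, and I would settle it by a contradiction argument over a fixed finite list of pairs. The idea is that differencing $H$ across pairs that differ by one unit in a single coordinate isolates $\gamma$ or $\delta$: here $H(1,3)-H(1,2)=\delta$ and $H(2,3)-H(1,3)=\gamma$. The three pairs $(1,2),(1,3),(2,3)$ are all coprime and all lie in $\mathbb{Q}\setminus\mathbb{Z}$, so if \eqref{cccond} failed at each of them we would obtain $\gamma,\delta\in\mathbb{Z}$, contradicting the hypothesis that at least one of $\gamma,\delta$ is a non-integer. Thus \eqref{cccond} is achieved by at least one of these three pairs. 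A minor point worth recording is that in case (iii) the exceptional ratio $j_1/j_2$ can itself be an integer (namely when $j_2=\pm1$), in which case it simply falls outside the admissible range and \eqref{cccond} holds throughout $\mathbb{Q}\setminus\mathbb{Z}$; this is consistent with the stated exception.
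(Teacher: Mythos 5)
Your proposal is correct. For cases (i)--(v) it follows essentially the same route as the paper: your Cramer-determinant observation (an integer value of $\gamma p+\delta q$ together with $j_1\gamma+j_2\delta+j_3=0$ forces $pj_2=qj_1$, else $\gamma$ would be rational) is just a repackaging of the paper's substitution $\delta=-\tfrac{j_1}{j_2}\gamma-\tfrac{j_3}{j_2}$, which yields $\gamma p+\delta q=\bigl(\tfrac pq-\tfrac{j_1}{j_2}\bigr)q\gamma-\tfrac{j_3}{j_2}q$; the divisibility endgame in (ii) and (iii), including the key point that $\gcd(j_1,j_2)\nmid j_3$, is identical. The genuine divergence is in case (vi): the paper argues constructively, taking $p$ to be a multiple of the denominator of $\gamma$ (so $\gamma p\in\mathbb Z$) and then any $q$ coprime to $p$ with $\delta q\notin\mathbb Z$, whereas you run a differencing argument on the three fixed pairs $(1,2)$, $(1,3)$, $(2,3)$, noting that failure of \eqref{cccond} at all three would give $\delta=H(1,3)-H(1,2)\in\mathbb Z$ and $\gamma=H(2,3)-H(1,3)\in\mathbb Z$. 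Your version is arguably cleaner: it produces a universal finite test set independent of $\gamma$ and $\delta$ and does not even use rationality of $\gamma,\delta$, only that one of them is a non-integer; the paper's version, in exchange, tells you explicitly which radii work in terms of the denominators of $\gamma$ and $\delta$. Your closing remark that in case (iii) the exception $p/q=j_1/j_2$ may be vacuous when $j_2=\pm1$ is a correct and worthwhile observation that the paper leaves implicit.
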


\begin{proof}
If $1, \gamma, \delta$ are linearly independent over the rationals, then $\gamma p+\delta q$ is not an integer and therefore condition \eqref{cccond} is achieved for any choice of $p/q$.

Let us now study the case $1,\gamma,\delta$ linearly dependent over the rationals. In particular, there exist three integers $j_1,j_2,j_3$, not all equal to $0$, such that
\[
j_1\gamma+j_2\delta+j_3=0.
\]
Assume without loss of generality that $j_1$, $j_2$ and $j_3$ do not have a nontrivial common divisor.

Suppose first that both $\gamma$ and $\delta$ are irrational. Then $j_1\neq0$ and $j_2\neq0$, and
\[
\delta=-\frac{j_1}{j_2}\gamma-\frac{j_3}{j_2}
\] 
so that
\[
\gamma p+\delta q=\gamma p+\left(-\frac{j_1}{j_2}\gamma-\frac{j_3}{j_2}\right)q=\left(\frac pq-\frac{j_1}{j_2}\right)q\gamma-\frac{j_3}{j_2}q.
\]
If $p/q\neq j_1/j_2$, then $\gamma p+\delta q$ is not rational and \eqref{cccond} is achieved.

Assume now $p/q=j_1/j_2$.

Suppose first that $j_1$ and $j_2$ are coprime. Then $q=\pm j_2$ and since
\[
\gamma p+\delta q=-\frac{j_3}{j_2}q=\pm{j_3},
\]
it follows that \eqref{cccond} is not achieved. 

Suppose now that $j_1=hp$ and $j_2=hq$ for some integer  $h\neq\pm1$. Of course $h$ does not divide $j_3$. Then 
\[
\gamma p+\delta q=-\frac{j_3}{j_2}q=-\frac{j_3}{h}\notin\mathbb Z,
\]
and \eqref{cccond} is achieved.

Suppose now $\gamma$ irrational and $\delta$ rational.  Since $p\neq 0$, then $\gamma p+\delta q$ is not rational and condition \eqref{cccond} is achieved.

Similarly, if $\delta$ is irrational and $\gamma$ is rational,  since $q\neq 0$, then $\gamma p+\delta q$ is not rational and condition \eqref{cccond} is achieved.

If $\gamma,\delta\in\mathbb Z$, then \eqref{cccond} is not achieved for any value of $p$ and $q$.

Finally, if $\gamma$ and $\delta$ are both rational, but, say, $\delta$ is not integer, then it suffices to let $p$ be any integer such that $\gamma p\in\mathbb Z$, and $q$ any integer prime with $p$ such that $\delta q\notin \mathbb Z$.  
\end{proof}

Since the Gegenbauer polynomials are particular cases of the Jacobi polynomials,
\[
P^{\lambda}_m(x)=\frac{\Gamma\big(\lambda+\frac12\big)\Gamma(m+2\lambda)}{\Gamma(2\lambda)\Gamma\big(m+\lambda+\frac12\big)}P^{\lambda-\frac12,\lambda-\frac12}_m(x)
\]
($\lambda>-1/2$), we can recover the result in \cite{BMS} on $\lambda$-gegenbadly approximable numbers.
\begin{Cor}\label{gegenbadly}
Let $\lambda>-1/2$. There exist $r\in\mathbb R\setminus \pi\mathbb Z$ and positive constants $C$ and $m_{0}$ such
that for $m\geq m_{0}$,%
\[
\left|P_m^{\lambda}(\cos r)\right|\geq Cm^{\lambda-1}
\]
if and only if $r/\pi=p/q\in\mathbb Q\setminus\mathbb Z$, with $p$ and $q$ coprime, and 
\begin{equation}
\label{ccccond}
\lambda p-\frac{\lambda-1}2 q\notin\mathbb Z.
\end{equation}
In particular, the following two cases are quickly treated.                         
\begin{enumerate}[(i)]
    \item Suppose $\lambda$ irrational. Then \eqref{ccccond} holds for any choice of coprime $p$ and $q$, except for $p/q=1/2$.
    \item Suppose $\lambda$ is an odd integer. Then \eqref{ccccond} does not hold for any choice of coprime $p$ and $q$.
    \item Suppose that $\lambda\in\mathbb Q$, but $\lambda$ is not an odd integer. Then there exist coprime $p$ and $q$ such that \eqref{ccccond} is achieved.
\end{enumerate}
\end{Cor}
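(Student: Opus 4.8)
The plan is to deduce the statement directly from Theorem \ref{jacobadly} and Proposition \ref{jacobcor}, using the displayed identity that writes $P_m^\lambda$ as a scalar multiple of the symmetric Jacobi polynomial $P_m^{\lambda-1/2,\lambda-1/2}$. First I would record the Jacobi parameters corresponding to the Gegenbauer case, namely $\alpha=\beta=\lambda-\frac12$, which is admissible since $\lambda-\frac12>-1$ is equivalent to the hypothesis $\lambda>-\frac12$. With this choice the quantities of Theorem \ref{jacobadly} become
\[
\gamma=\frac{\alpha+\beta+1}{2}=\lambda,\qquad \delta=-\frac{2\alpha-1}{4}=-\frac{\lambda-1}{2},
\]
so that the condition $\gamma p+\delta q\notin\mathbb Z$ of \eqref{cccond} coincides verbatim with \eqref{ccccond}.

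Next I would account for the different power of $m$ in the two statements. From
\[
P_m^{\lambda}(\cos r)=\frac{\Gamma\big(\lambda+\frac12\big)\Gamma(m+2\lambda)}{\Gamma(2\lambda)\Gamma\big(m+\lambda+\frac12\big)}P_m^{\lambda-\frac12,\lambda-\frac12}(\cos r)
\]
and the standard ratio asymptotic $\Gamma(m+s)/\Gamma(m+t)\approx m^{s-t}$, the prefactor satisfies $\Gamma(m+2\lambda)/\Gamma\big(m+\lambda+\frac12\big)\approx m^{\lambda-\frac12}$. Hence $|P_m^{\lambda}(\cos r)|\approx m^{\lambda-\frac12}\,|P_m^{\lambda-\frac12,\lambda-\frac12}(\cos r)|$, and the bound $|P_m^{\lambda}(\cos r)|\geq Cm^{\lambda-1}$ for all large $m$ is equivalent to $|P_m^{\lambda-\frac12,\lambda-\frac12}(\cos r)|\geq C'm^{-\frac12}$ for all large $m$. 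By Theorem \ref{jacobadly} the latter holds for a suitable $C'>0$ precisely when $r/\pi=p/q\in\mathbb Q\setminus\mathbb Z$ with $p,q$ coprime and \eqref{ccccond} is satisfied; this is the main equivalence.

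It then remains to specialize Proposition \ref{jacobcor} to obtain the three enumerated cases. The structural observation I would emphasize is the identity $\gamma+2\delta=\lambda+2\big(-\frac{\lambda-1}{2}\big)=1$, valid for every $\lambda$: thus $1,\gamma,\delta$ always obey the relation $1\cdot\gamma+2\cdot\delta-1=0$, whose coefficients $j_1=1$, $j_2=2$ are coprime, and moreover $\gamma$ is rational if and only if $\delta$ is, if and only if $\lambda$ is rational. If $\lambda$ is irrational, then $\gamma,\delta$ are irrational and case (iii) of Proposition \ref{jacobcor} applies with $j_1/j_2=1/2$, yielding case (i) of the corollary. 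If $\lambda$ is an odd integer, then both $\gamma=\lambda$ and $\delta=-\frac{\lambda-1}{2}$ are integers, so case (v) of Proposition \ref{jacobcor} applies, yielding case (ii). If $\lambda$ is rational but not an odd integer, then at least one of $\gamma,\delta$ is a non-integer rational---$\delta\notin\mathbb Z$ when $\lambda$ is even, $\gamma\notin\mathbb Z$ when $\lambda$ is a non-integer rational---so case (vi) applies and yields case (iii).

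The argument is essentially bookkeeping on top of the two earlier results, so I do not expect a serious obstacle. The only point requiring mild care is the Gamma-ratio asymptotic that converts the exponent $-\frac12$ into $\lambda-1$, together with checking that the constant $\Gamma\big(\lambda+\frac12\big)/\Gamma(2\lambda)$ is a well-defined positive number; this is immediate for $\lambda>0$ and is otherwise handled by the usual analytic continuation of $\Gamma$. Matching the correct case of Proposition \ref{jacobcor}, guided by the identity $\gamma+2\delta=1$, is the one spot where an error could creep in.
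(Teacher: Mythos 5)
Your proposal is correct and follows exactly the route the paper intends: specialize Theorem \ref{jacobadly} to $\alpha=\beta=\lambda-\tfrac12$ (so $\gamma=\lambda$, $\delta=-\tfrac{\lambda-1}{2}$), convert the exponent via the Gamma-ratio asymptotic, and sort the three cases through Proposition \ref{jacobcor} using the relation $\gamma+2\delta=1$. The paper's own proof is just the one-line remark that cases (i), (ii) and (iv) of Proposition \ref{jacobcor} cannot occur, and your bookkeeping (including the exception $p/q=j_1/j_2=1/2$ in the irrational case) supplies precisely the omitted details.
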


\begin{proof}
It suffices to apply Theorem \ref{jacobadly} and Proposition \ref{jacobcor}, and notice that the cases (i), (ii) and (iv) of Proposition \ref{jacobcor} do not occur.
\end{proof}

We are now ready to apply Theorem \ref{jacobadly} and Proposition \ref{jacobcor} to estimate \eqref{int} from below.

\begin{lemma}
\label{Lemma un cerchio}
Let $0<r<\pi$. There exist positive constants $C$ and $m_{0}$ such
that for $m\geq m_{0}$,%
\[
\left\vert \int_{B_{r}(o)}Z_{o}^{m}(x)d\mu(x)\right\vert \geq Cd_{m}%
m^{-a-3/2}
\]
if and only if $r/\pi=p/q$, where $p$ and $q$ are coprime and
\[
\frac{d+d_0+2}4p-\frac{d-1}4q\notin\mathbb Z.
\]
In particular, in this case, $d\not \equiv 1\,( \operatorname{mod}4)$.
\end{lemma}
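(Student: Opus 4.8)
The plan is to read the estimate directly off the identity in Lemma \ref{Trasf bolla} together with the pointwise lower bound for Jacobi polynomials furnished by Theorem \ref{jacobadly}. That identity writes $\int_{B_r(o)}Z_o^m(x)\,d\mu(x)$ as an explicit $m$-dependent prefactor multiplied by $P_{m-1}^{a+1,b+1}(\cos r)$, so the whole problem reduces to deciding for which $r\in(0,\pi)$ the quantity $|P_{m-1}^{a+1,b+1}(\cos r)|$ stays above a constant multiple of $m^{-1/2}$ for all large $m$.

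First I would pin down the size of the prefactor. Using $P_m^{a,b}(1)=\Gamma(m+a+1)/(\Gamma(m+1)\Gamma(a+1))$ (see \cite[(4.1.1)]{szego}) together with $\Gamma(m+a+1)/\Gamma(m+1)\approx m^{a}$, one obtains
\[
\frac{c(a,b)\,d_m}{m\,P_m^{a,b}(1)}\left(\sin\frac{r}{2}\right)^{2a+2}\left(\cos\frac{r}{2}\right)^{2b+2}\approx d_m\,m^{-a-1},
\]
with positive implied constants depending only on $a$, $b$ and $r$; here it is crucial that $0<r<\pi$, so that the two trigonometric factors are bounded away from $0$ and $\infty$. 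Hence
\[
\left|\int_{B_r(o)}Z_o^m(x)\,d\mu(x)\right|\approx d_m\,m^{-a-1}\,\bigl|P_{m-1}^{a+1,b+1}(\cos r)\bigr|,
\]
and, the shift $m\mapsto m-1$ being harmless since $(m-1)^{-1/2}\approx m^{-1/2}$, the desired bound $\ge C d_m m^{-a-3/2}$ holds for all $m\ge m_0$ if and only if $|P_{m-1}^{a+1,b+1}(\cos r)|\ge C'm^{-1/2}$ for all $m\ge m_0$. This two-sided control of the prefactor is the step I expect to demand the most care, because only with genuinely positive constants uniform in $m$ can the biconditional of Theorem \ref{jacobadly} be transferred faithfully to the integral---in particular so that failure of the arithmetic condition forces the integral to drop below $C d_m m^{-a-3/2}$ along a subsequence.

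Next I would apply Theorem \ref{jacobadly} with $\alpha=a+1=d/2$ and $\beta=b+1=d_0/2$. Its parameters then read $\gamma=(\alpha+\beta+1)/2=(d+d_0+2)/4$ and $\delta=-(2\alpha-1)/4=-(d-1)/4$, so that the theorem yields $|P_{m-1}^{a+1,b+1}(\cos r)|\ge C'm^{-1/2}$ for all large $m$ exactly when $r/\pi=p/q\in\mathbb Q\setminus\mathbb Z$ with $p,q$ coprime and $\frac{d+d_0+2}{4}p-\frac{d-1}{4}q\notin\mathbb Z$. Combined with the equivalence of the previous paragraph, this is precisely the stated characterization.

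Finally, for the closing assertion I would observe that $d\equiv1\,(\operatorname{mod}4)$ forces $d$ odd, so $\mathcal M$ must be the sphere ($d_0=d$) or the real projective space ($d_0=1$). In the first case $\gamma=(d+1)/2$ and $\delta=-(d-1)/4$ are both integers, and in the second $\gamma=(d+3)/4$ and $\delta=-(d-1)/4$ are both integers as well. By case (v) of Proposition \ref{jacobcor}, condition \eqref{cccond} then fails for every coprime pair $p,q$, so the lower bound cannot hold for any $r$; equivalently, whenever it does hold we necessarily have $d\not\equiv1\,(\operatorname{mod}4)$.
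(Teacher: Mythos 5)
Your proof is correct and follows essentially the same route as the paper: read the bound off the identity of Lemma \ref{Trasf bolla}, note that the prefactor is comparable to $d_m m^{-a-1}$ via $P_m^{a,b}(1)\approx m^a$, and apply Theorem \ref{jacobadly} with $\alpha=a+1$, $\beta=b+1$. Your explicit verification that $\gamma$ and $\delta$ are both integers when $d\equiv1\,(\operatorname{mod}4)$ (for both $d_0=d$ and $d_0=1$) is a detail the paper leaves implicit, but it is consistent with and completes the same argument.
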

Observe that the above condition can easily be satisfied, for example take $p\in 4\mathbb Z$ and $q$ odd.

\begin{proof}
By Lemma \ref{Trasf bolla}, for any $0\leq r\leq\pi$ and for any
$m\geq1$ we have%
\[
\int_{B_{r}(o)}Z_{o}^{m}(x)d\mu(x)=\frac{c( a,b) d_{m}}{mP_{m}^{a,b}%
(1)}P_{m-1}^{a+1,b+1}(\cos r)\left(  \sin  \frac{r}{2}  \right)
^{2a+2}\left(  \cos \frac{r}{2}  \right)  ^{2b+2}.
\]
We can now apply Theorem \ref{jacobadly} with $\alpha=a+1=d/2$ and $\beta=b+1=d_0/2$, so that
$\gamma=(\alpha+\beta+1)/2=(d+d_0+2)/4$ and $\delta=-(2\alpha-1)/4=-(d-1)/4$. The result now follows immediately, recalling that $P_m^{a,b}(1)={m+a \choose m}\approx m^a$.
\end{proof}

 We can now prove Theorem \ref{Thm 1} and Theorem \ref{d1mod4}.

\begin{proof}
[Proof of Theorem \ref{Thm 1}]Let $m_{0}$ as in Lemma \ref{Lemma un cerchio}
and let $L\geq m_{0}$. Then by Lemma \ref{Lemma Quadrato Discrep},
Proposition \ref{Prop Cassels} and Lemma \ref{Lemma un cerchio}, we have%
\begin{align*}
&  \left\Vert D_{p\pi/q}\right\Vert _{2}^{2}\\
  & =  \ \sum_{m=1}^{+\infty} d_{m}^{-2}  \left\vert \int_{B_{p\pi/q}(o)}%
Z_{o}^{m}(x)d\mu(x)\right\vert ^{2} \sum_{\ell=1}^{d_{m}}\left\vert
\sum_{j=1}^{N}a_{j}Y_{m}^{\ell}(x_{j})\right\vert ^{2}\\
 & \geq  \ \min_{m_{0}\leq m\leq L}\left(  d_{m}^{-2}
\left\vert \int_{B_{p\pi/q}(o)}Z_{o}^{m}(x)d\mu(x)\right\vert ^{2}  \right) 
\sum_{m=m_{0}}^{L}\sum_{\ell=1}^{d_{m}}\left\vert \sum_{j=1}^{N}%
a_{j}Y_{m}^{\ell}(x_{j})\right\vert ^{2}\\
 & \geq  \ C\left(  \min_{m_{0}\leq m\leq L}m^{-2a-3}\right)
\left(  C_{1}L^{d}\sum_{j=1}^{N}a_{j}^{2}-C_{0}m_0^d\right)  \geq
CL^{-2a-3}\left(  C_{1}L^{d}\sum_{j=1}^{N}a_{j}^{2}-C_{0}m_0^d\right)  .
\end{align*}
Applying the Cauchy-Schwarz inequality to $\sum_{j=1}^{N}a_{j}=1$ gives
\[
\sum_{j=1}^{N}a_{j}^{2}\geq\frac{1}{N}.
\]
Let $N\geq N_{0}=C_{1}/(2C_{0})$. Then, setting $L=\lfloor
m_0(2C_{0}C_{1}^{-1}N)^{1/d}\rfloor  +1$, we have $L\geq m_{0}$, \[C_{1}%
L^{d}\sum_{j=1}^{N}a_{j}^{2}-C_{0}m_0^d\geq C_1\frac{L^d}{2N}\] and%
\begin{equation}
\left\Vert D_{p\pi/q}\right\Vert _{2}^{2}%
\geq CN^{-1-\frac{1}{d}}. \label{Discrep N}%
\end{equation}
Let now $N<N_{0}$ and let us consider the points and weights%
\[
\widetilde{x}_{j}=%
\begin{cases}
x_{j} & 1\leq j\leq N-1,\\
x_{N} & N\leq j\leq N_{0},
\end{cases}
\qquad\widetilde{a}_{j}=%
\begin{cases}
a_{j} & 1\leq j\leq N-1,\\
\dfrac{a_{N}}{N_{0}-N+1} & N\leq j\leq N_{0}.
\end{cases}
\]
Since the discrepancy $\widetilde{D}_{r}$ of the points $\{\widetilde{x}%
_{j}\}_{j=1}^{N_{0}}$ and weights $\{\widetilde{a}_{j}\}_{j=1}^{N_{0}}$
coincides with the discrepancy $D_{r}$ of the points $\{x_{j}\}_{j=1}^{N}$ and
weights $\{a_{j}\}_{j=1}^{N}$, applying (\ref{Discrep N}) to $\widetilde
{D}_{r}$ gives%
\[
\left\Vert D_{p\pi/q}\right\Vert _{2}^{2}\geq CN_{0}^{-1-\frac{1}{d}}\geq CN_{0}
^{-1-\frac{1}{d}}N^{-1-\frac{1}{d}}%
\]
also when $1\leq N<N_{0}.$
\end{proof}

\begin{proof}[Proof of Theorem \ref{d1mod4}] Let $H$ and $L$ be positive integers that will be fixed later. By Lemma \ref{Lemma Quadrato Discrep}, 
\begin{align*}
\sum_{n=1}^H \frac1H\left\Vert D_{p_n\pi/q_n}\right\Vert _{2}^{2}&=\sum
_{m=1}^{+\infty}\sum_{\ell=1}^{d_{m}}\left\vert \sum_{j=1}^{N}a_{j}Y_{m}%
^{\ell}( x_{j}) \right\vert ^{2}\sum_{n=1}^H\frac{1}{H} \left\vert \int_{B_{p_n\pi/q_n}( o) }%
d_{m}^{-1}Z_{o}^{m}( y) d\mu( y) \right\vert ^{2}\\
&\geq \sum
_{m=1}^{L}\sum_{\ell=1}^{d_{m}}\left\vert \sum_{j=1}^{N}a_{j}Y_{m}%
^{\ell}( x_{j}) \right\vert ^{2}\sum_{n=1}^H\frac{1}{H} \left\vert \int_{B_{p_n\pi/q_n}( o) }%
d_{m}^{-1}Z_{o}^{m}( y) d\mu( y) \right\vert ^{2}.
\end{align*}
By Lemma \ref{Trasf bolla} and \cite[Theorem 8.21.8]{szego}, uniformly in $\varepsilon\leq r\leq\pi-\varepsilon$,
\begin{align*}
&\int_{B_{r}(o)}d_{m}^{-1}Z_{o}^{m}(x)d\mu(x)=\frac{c( a,b)}{mP_{m}^{a,b}%
(1)}P_{m-1}^{a+1,b+1}(\cos r)\left(  \sin \frac{r}{2}  \right)
^{2a+2}\left(  \cos  \frac{r}{2}  \right)  ^{2b+2}\\
&= \frac{c( a,b)\pi^{-\frac{1}{2}}}{m^{\frac32}P_{m}^{a,b}%
(1)}\left(\sin \frac{r}{2} \right)^{a+\frac{1}{2}}\left(\cos\frac{r}{2} \right)^{b+\frac{1}{2}}\cos\left(\Big(m+\frac{a+b+1}{2} \Big)r-\Big(a+\frac{3}{2}\Big)\frac{\pi}{2} \right)+O(m^{-a-\frac{5}{2}}).
\end{align*}
Since $d=4S+1$ for some positive integer $S$, then $\mathcal{M}$ is either $\mathcal{S}^d$ or $P^d(\mathbb{R})$ and therefore $d_0=d$ or $d_0=1$. Also $a=(d-2)/2$ and $b=(d_0-2)/2$. Hence, whenever $m+S+(d_0-1)/4\notin q_n\mathbb{Z}$, since $q_n\leq 2n\log(n+1)$ (see \cite[formula (3.13)]{BS}), we have
\begin{align*}
&\left|\cos\left(\Big(m+\frac{d+d_0-2}{4} \Big)\frac{p_n}{q_n}\pi-\frac{d+1}{4}\pi \right)\right|=\left|\cos\left(\Big(m+\frac{4S+d_0-1}{4} \Big)\frac{p_n}{q_n}\pi-\frac{4S+2}{4}\pi \right)\right|\\
&=\left|\sin\left(\Big(m+S+\frac{d_0-1}{4} \Big)\frac{p_n}{q_n}\pi\right)\right|\geq \left|\sin\left(\frac{\pi}{q_n}\right)\right|\geq \frac{2}{q_n}\geq \frac{1}{n\log (n+1)}.
\end{align*}
Now, for each $m$ such that  \[m+S+\frac{d_0-1}{4}< q_1\cdots q_H,\] there exists $1\leq n\leq H$ such that $m+S+(d_0-1)/4\notin q_n\mathbb{Z}$. Hence, if \[L<q_1\cdots q_H-S-\frac{d_0-1}{4},\] then, for $m\le L$,
\begin{align*}\sum_{n=1}^H\frac{1}{H} \left\vert \int_{B_{p_n\pi/q_n}( o) }%
d_{m}^{-1}Z_{o}^{m}( y) d\mu( y) \right\vert ^{2}\geq \frac{1}{H}\left(\frac{1}{H^2\log^2 H}  \frac{c_1}{m^{2a+3}}-\frac{c_2}{m^{2a+5}}\right),
\end{align*}
and, for $m\geq c_3H\log H$ with $c_3$ a suitable constant (say, $c_3=(2c_2/c_1)^{1/2}$),
\[\sum_{n=1}^H\frac{1}{H} \left\vert \int_{B_{p_n\pi/q_n}( o) }%
d_{m}^{-1}Z_{o}^{m}( y) d\mu( y) \right\vert ^{2}\geq \frac{1}{H^3\log^2 H}\frac{c_1}{2m^{2a+3}}.\]
Therefore, 
\begin{align*}
\sum_{n=1}^H \frac1H\left\Vert D_{p_n\pi/q_n}\right\Vert _{2}^{2}&\geq \sum
_{m=1}^{L}\sum_{\ell=1}^{d_{m}}\left\vert \sum_{j=1}^{N}a_{j}Y_{m}%
^{\ell}( x_{j}) \right\vert ^{2}\sum_{n=1}^H\frac{1}{H} \left\vert \int_{B_{p_n\pi/q_n}( o) }%
d_{m}^{-1}Z_{o}^{m}( y) d\mu( y) \right\vert ^{2}\\
&\geq \sum
_{m=c_3 H\log H}^{L}\sum_{\ell=1}^{d_{m}}\left\vert \sum_{j=1}^{N}a_{j}Y_{m}%
^{\ell}( x_{j}) \right\vert ^{2}\frac{1}{H^3\log^2 H}  \frac{c_1}{2m^{2a+3}}.
\end{align*}
By Proposition \ref{Prop Cassels},
\begin{align*}
\sum_{n=1}^H \frac1H\left\Vert D_{p_n\pi/q_n}\right\Vert _{2}^{2}&\geq\frac{1}{H^3\log^2 H}\frac{c_1}{2L^{2a+3}}\sum
_{m=c_3H\log H}^{L}\sum_{\ell=1}^{d_{m}}\left\vert \sum_{j=1}^{N}a_{j}Y_{m}%
^{\ell}( x_{j}) \right\vert ^{2} \\& \geq \frac{1}{H^3\log^2 H}\frac{c_1}{2L^{d+1}}\left(C_1\frac{L^d}{N}-C_0c_3^dH^d\log^dH \right).
\end{align*}
If $H\ge13$, by \cite[Theorem 4]{Robin}, then $q_1\cdots q_H >e^{H\log H}$. Since we need $L<q_1\cdots q_H-S-(d_0-1)/4$,  it suffices to have $S+(d_0-1)/4\le L\leq e^{H\log H}/2$. 
Let us choose
\[
L= L_N:=\kappa N^{1/d}\log N,\quad H= H_L:=\tau\frac{\log(2L)}{\log\log(2L)}= \tau \frac{\frac 1 d\log N+\log(2\kappa)+\log\log N}{\log(\frac 1 d\log N+\log(2\kappa)+\log\log N)},\]
for some $\kappa,\tau\ge1$ to be fixed later. Then
\[
e^{H\log H}= e^{\frac{\tau\log(2L)}{\log\log(2L)}\log\left(\frac{\tau\log(2L)}{\log\log(2L)}\right)}=
e^{{\tau\log(2L)}\left(1+\frac{\log\tau}{\log\log(2L)}-\frac{\log\log\log(2L)}{\log\log(2L)}\right)}\ge 2L
\]
for all $L> e/2$ if $\tau$ is large enough (say, $\tau\ge2$). 
Also, if $N\ge e$ and $2L\geq e^e$,
\begin{align*}
C_0c_3^dH^d\log^dH&=C_0c_3^d\left(\tau\frac{\log(2L)}{\log\log(2L)}\log\left(\tau\frac{\log(2L)}{\log\log(2L)}\right)\right)^d\\
&=C_0c_3^d\left(\tau{\log(2L)}\left(\frac{\log\tau}{{\log\log(2L)}}+1-\frac{{\log\log\log(2L)}}{{\log\log(2L)}}\right)\right)^d\\
&\le C_0c_3^d\tau^d{\left(\log(2\kappa)+\log\log N+\frac 1 d \log N\right)^d}(\log\tau+1)^d \\
&\le C_0c_3^d\tau^d(\log\tau+1)^d\left({\log(2\kappa)}+1+\frac 1 d\right)^d \log^d N \\
&\leq \frac{C_1}{2}{\kappa^d\log^d N}= \frac{C_1}{2}\frac{L^d}{N},
\end{align*}
if 
\[
C_0^{1/d}c_3\tau(\log\tau+1)\left({\log(2\kappa)}+1+\frac 1 d\right) \leq \left(\frac{C_1}{2}\right)^{\frac1d}\kappa.
\]
If $2\kappa\geq e^2$, then
\[
\log(2\kappa)+1+\frac 1 d\leq2\log(2\kappa),
\]
so that we need
\[
\frac{\kappa}{2\log(2\kappa)}\ge\frac{(2C_0)^{1/d}c_3\tau(\log\tau+1)}{C_1^{1/d}}=:\gamma.
\]
Assuming, as we may, $\gamma\ge e/4$, 
it suffices
\[
\kappa\ge4\gamma\log(4\gamma).
\]
Hence, if $\tau=13$ and $\kappa\ge\max\{S+(d_0-1)/4,\,e^e/2,\,4\gamma\log(4\gamma)\}$, then for all $N\geq 3$,
\[\sum_{n=1}^H \frac1H\left\Vert D_{p_n\pi/q_n}\right\Vert _{2}^{2}\geq C_1c_1\frac{1}{H^3\log^2 H}\frac{1}{L}\frac{1}{4N}\geq CN^{-1-1/d}\frac{\log\log N}{\log^4N}.\]
Since the function $x\mapsto x/\log(x)$ is increasing in $x\ge e$, it follows that
\[
H_L\leq \tau\frac{(2+\log(2\kappa)) \log N}{\log\left( (2+\log(2\kappa)\log N\right)}
\leq
\tau(2+\log(2\kappa)) \frac{\log N}{\log\left( \log N\right)}.
\]
Thus, in the statement of the theorem, we can say that for all $N\geq 3$, \[n\le \tau(2+\log(2\kappa)) \frac{\log N}{\log\left( \log N\right)}.\]

%
%
%
%
\end{proof}

\bibliographystyle{amsalpha}
\bibliography{single-bib}

\end{document}